\newtheorem{theorem}{Theorem}[section]
\newtheorem{lem}[theorem]{Lemma}
\newtheorem{prop}[theorem]{Proposition}
\theoremstyle{definition}
\newtheorem{defn}[theorem]{Definition}
\newtheorem{exam}[theorem]{Example}
\newcommand{\nc}{\newcommand}
\newcommand{\delete}[1]{}
\nc{\mlabel}[1]{\label{#1}}  
\nc{\mcite}[1]{\cite{#1}}  
\nc{\mref}[1]{\ref{#1}}  
\nc{\meqref}[1]{\eqref{#1}}  
\nc{\mbibitem}[1]{\bibitem{#1}} 
\nc{\mlabel}[1]{\label{#1}  
{\hfill \hspace{1cm}{\small\tt{{\ }\hfill(#1)}}}}
\nc{\mcite}[1]{\cite{#1}{\small{\tt{{\ }(#1)}}}}  
\nc{\mref}[1]{\ref{#1}{{\tt{{\ }(#1)}}}}  
\nc{\meqref}[1]{\eqref{#1}{{\tt{{\ }(#1)}}}}  
\nc{\mbibitem}[1]{\bibitem[\bf #1]{#1}} 
\nc{\mOmega}{X}
\nc{\groupoid}{magma\xspace}
\nc{\groupoids}{magmas\xspace}
\nc{\mtop}{\top\hspace{-1mm}}
\nc{\mim}{\text{im}}
\nc{\tforall}{\text{ for all }}
\nc{\bfcf}{{\calc}}
\nc{\PP}{\mathbb{P}}
\newcommand{\Q}{\mathbb{Q}}
\newcommand {\calc}{{\mathcal {C}}}
\newcommand {\call}{{\mathcal {L}}}
\newcommand {\calp}{{\mathcal {P}}}
\newcommand {\calq}{{\mathcal {Q}}}
\newcommand {\cals}{{\mathcal {S}}}
\nc{\Id}{\mathrm{Id}}
\nc{\ot}{\otimes}
\nc{\bt}{\boxtimes}
\nc{\id}{\mathrm{Id}}
\nc{\Hom}{\mathrm{Hom}}
\nc{\im}{{\mathfrak Im}}
\nc{\mge}{_{bu}\!\!\!\!{}}
\nc{\vep}{\varepsilon}
\nc{\syd}[1]{}
\nc{\prt}{P-}
\nc{\Prt}{P-}
\nc{\orth}{orthogonal\xspace}
\nc{\tloc}{L-}
\nc{\gnt}{{G^{_\top n}}}
\nc{\BA}{{\mathbb A}} \nc{\CC}{{\mathbb C}} \nc{\DD}{{\mathbb D}} \nc{\EE}{{\mathbb E}} \nc{\FF}{{\mathbb F}} \nc{\GG}{{\mathbb G}} \nc{\HH}{{\mathbb H}} \nc{\NN}{{\mathbb N}} \nc{\KK}{{\mathbb K}}
\nc{\QQ}{{\mathbb Q}} \nc{\RR}{{\mathbb R}} \nc{\TT}{{\mathbb T}} \nc{\VV}{{\mathbb V}} \nc{\ZZ}{{\mathbb Z}}
 \nc{\bfk}{{\bf k}}
\nc{\ID}{\mathfrak{I}} \nc{\lbar}[1]{\overline{#1}}
\nc{\bre}{{\rm b}} \nc{\sd}{\cals} \nc{\rb}{\rm RB}
\nc{\A}{\rm angularly decorated\xspace} \nc{\LL}{\rm L}
\nc{\w}{\rm wid} \nc{\arro}[1]{#1}
\nc{\ver}{\rm ver}
\nc{\FL}{F_{\mathrm L}}
\nc{\FNA}{\FN(A)} \nc{\NA}{N_{A}}
\nc{\dr}{\diamond_r}
\nc{\shar}{{\mbox{\cyrs X}}_r} 
\nc{\dt}{\Delta_T}
\nc{\da}{\Delta_A}
\nc{\vt}{\vep_T }
\nc{\bul}{\bullet}
\nc{\free}[1]{\bar{#1}}
\nc{\lt}{{}^\top\!U}
\nc{\rt}{U^\top}
\nc{\lts}{{}^\top\!}
\nc{\weak}{strong\xspace}
\nc{\strong}{strong\xspace}
\nc{\fine}{rigid\xspace}
\nc{\full}{smooth\xspace}
\nc{\ful}{\mathrm{sm}}
\nc{\fuly}{smoothness\xspace}
\nc{\sfull}{{\rm sm}}
\nc{\reduced}{reduced\xspace}
\nc{\sx}{\sigma}
\nc{\tx}{{\tau}}
\nc{\sq}{{s_Q}\xspace}
\nc{\tq}{{t_Q}\xspace}
\nc{\ol}{\overline}
\nc{\mreg}{closed\xspace}
\nc{\sreg}{{\rm cl}}
\nc{\mregy}{closeness\xspace}
\nc{\reg}{{\text{cl}}}
\begin{document}

\title[Quivers, paths and locality]{Quivers and path semigroups characterized by locality conditions}

\author{Li Guo}
\address{
	Department of Mathematics and Computer Science,
	Rutgers University,
	Newark, NJ 07102, United States}
\email{liguo@rutgers.edu}

\author{Shanghua Zheng}
\address{Department of Mathematics, Jiangxi Normal University, Nanchang, Jiangxi 330022, China}
\email{zhengsh@jxnu.edu.cn}

\date{\today}

\begin{abstract}
The notion of locality semigroups was recently introduced with motivation from locality in convex geometry and quantum field theory.
We show that there is a natural correspondence between locality sets and quivers which leads to a concrete class of locality semigroups given by the paths of quivers. Further these path semigroups from paths are precisely the free objects in the category of locality semigroups with a \fine condition. This characterization gives a universal property of path algebras and at the same time a combinatorial realization of free \fine locality semigroups.
\end{abstract}

\subjclass[2010]{20M05, 
	 16G20, 	
	 18B40, 	
	 05C38, 	
	 08A55		
	 08B20		
 }

\keywords{locality, path algebra, free objects, quiver, partial semigroup}

\maketitle

\tableofcontents

\setcounter{section}{0}

\allowdisplaybreaks

\section{Introduction}
The notion of locality is widely  employed in various branches of mathematics, such as local algebras~\mcite{JS} and local operators in functional analysis~\mcite{BAE,Bat}.   Locality is also used in  computer science and physics, especially in classic and quantum field theory.  For instance, the locality principle (often called Einstein causality) is a critical factor in relativity and quantum field theory~\mcite{R,R2}.  It is well known that renormalization, which is a technique to remove the divergences in  Feynman integrals calculations,  plays an important role in quantum field theory~\mcite{Bor,BP,CK,Z} and mathematics~\mcite{GZ,GPZ3,M}.
More recently, an algebraic approach to preserve locality in the renormalization in the framework of algebraic Birkhoff factorization was proposed~\mcite{CGPZ1,CGPZ2, CGPZ3}. For this purpose, a hierarchy of locality algebraic structures are established, starting with locality semigroups and culminating to locality Hopf algebras and locality Rota-Baxter algebras.  See~\mcite{GPZ4} for a survey. 

From a classic point of view, locality semigroups can be regarded as a partial semigroup for which the restriction of multiplication is precisely prescribed by a locality relation.
On the other hand, a natural supply of locality semigroups are provided by path algebras of quivers.

As a basic notion in mathematics, quivers have been studied algebraically in terms of quiver representations and path algebras, with wide applications to invariant theory, Kac-Moody Lie algebras and quantum groups~\mcite{FH,HJ,Scho}.
The well-known Gabriel Theorem shows that a basic algebra over an algebraically closed field is isomorphic to a quotient of the path algebra of its Ext-quiver modulo an admissible ideal~\mcite{ASS,ARS}. More generally,  an Artinian algebra over a perfect field is given by a quotient of the generalized path algebra of its natural quiver~\mcite{LL}.
Recently, combinatorial properties of the Hochschild cohomology of path algebras is studied in terms of the genus of the quiver~\mcite{LT}.
Furthermore, the Hopf algebras on path algebras,  reconstruction of path algebras and the dimensions of path algebras were studied ~\mcite{AH,HU,KO,KW}.

To better understand the importance of quivers and path algebras, it is desirable to find intrinsic and categorical properties that characterize them. One natural question in this regard is whether quivers and path algebras have certain universal properties in suitable categories. The multiplication of quivers and paths are only partially defined, suggesting that a universal property might need to be formulated in the locality setting.

This is the motivation of our study. Indeed we show that such a characterization can be achieved using the language of locality semigroups. The precision in the notion of locality semigroups allows for a more detailed study of quivers. In fact, there is a locality condition that defines the category of \fine locality semigroups for which the locality semigroups of quivers are precisely the free objects. This result gives the desired categorical characterization of path semigroups, while at the same time, gives a concrete interpretation of free \fine locality semigroups. As a byproduct, we obtain a one-to-one correspondence between the quivers from locality sets and locality sets from quivers.

Here is an outline of the paper.

First in Section~\mref{subsec:locsg}, we give correspondences between locality sets and quivers. In particular, the locality set from a quiver gives a locality semigroup structure on the paths of the quiver. We then give a characterization of the locality sets from quivers (Proposition~\mref{prop:quireg}), in terms of the notion of \mreg locality sets. Every locality set has a unique envelop by a \mreg locality set. Further, when a quiver is sent to a locality set and then back to a quiver, the path locality semigroup does not change.

In Section~\mref{ss:all}, we also give a characterization of the quivers coming from locality sets, in terms of the notion of \full quivers. Every quiver has a unique cover by a \full quiver. Then it is shown that the previous correspondence between locality sets and quivers restricts to a one-to-one correspondence between \mreg locality sets and \full quivers (Theorem~\mref{thm:qxq}).

Finally in Section~\mref{subsec:fineLSG}, we introduce a notion of \fine locality semigroups to describe the class of locality semigroups from quivers. We then show that locality semigroups from quivers are precisely the free \fine locality semigroup generated by the given locality set. Moreover, it is shown that the locality semigroup of every quiver is a free \fine locality semigroup (Theorem~\mref{thm:freeloc1}).

\section{Locality sets and quivers}
\mlabel{subsec:locsg}

We briefly recall basic notions of locality semigroups and refer the reader to ~\mcite{CGPZ1,CGPZ2,CGPZ3} for further details.

\begin{defn}
\begin{enumerate}
\item
A {\bf locality set} is a pair $(X,\top)$, where $X$ is a set and
$$\top:=\mtop_X:=X\times_\mtop X\subseteq X\times X$$
is a binary relation on $X$, called the {\bf locality relation} of the locality set.
\item Let $(X,\mtop_X)$ be a locality set. Let $X'$ be a subset of $X$ and let $\mtop_{X'}:=(X'\times X')\cap \mtop_X$. The pair $(X',\mtop_{X'})$ is called a {\bf sub-locality set} of $(X,\mtop_X)$.
\item
A locality set $(X,\top)$ together with a partially defined binary operation
$$\mu:X\times_{\top} X\to X,\quad (x,y)\mapsto \mu(x,y)\,\, \text{for all}\,\, (x,y)\in X\times_{\top} X,$$
is called a {\bf locality \groupoid}, denoted by $X:=(X,\top,\mu)$. The image $\mu(x,y)$ is abbreviated as $x\cdot y$ or $xy$ if its meaning is clear from the context.
\item
A {\bf locality semigroup} is a locality \groupoid $(S,\top,\mu)$ that, for $(a,b), (a,c), (b,c)\in \top$, we have
$(ab,c), (a,bc)\in \top$ and the {\bf locality associative law} holds:
\begin{equation}
(a b) c=a (b c).
	\mlabel{eq:locass}
\end{equation}
A locality semigroup $(S,\top,\mu)$ is abbreviated as $(S,\top)$ if the meaning of $\mu$ is understood.
\end{enumerate}
\mlabel{defn:locsg}
\end{defn}
A locality \groupoid is also called a {\bf partial groupoid}~\mcite{Ev,LE,MPS}.

\begin{defn}
\begin{enumerate}
\item
Let $(X,\mtop_X)$ and $(Y,\mtop_Y)$ be locality sets. A map $\phi:X\to Y$ is called a {\bf locality map} if it preserves the localities in the sense that
$$(\phi \times \phi)(\mtop_X)\subseteq \mtop_Y.$$
\item Let $(S_1,\mtop_{S_1},\cdot_{S_1})$ and $(S_2,\mtop_{S_2},\cdot_{S_2})$ be locality semigroups. A locality map $\phi:S_1\to S_2$ is called a {\bf locality semigroup homomorphism} if it is {\bf locality multiplicative} in the sense that
$$\phi(a\cdot_{S_1} b)=\phi (a)\cdot_{S_2}\phi(b) \quad \text{for all } (a,b)\in \mtop_{S_1}.$$
\end{enumerate}
\end{defn}

See~\mcite{CGPZ1,CGPZ2,CGPZ3} for a systemic study of locality sets, locality semigroups and the related structures, as well as as their application in renormalization of quantum field theory. We will focus on their close relationship with quivers and path algebras. We first recall the basic notions for later applications.

A {\bf quiver}  is a quadruple $ Q:=(Q_0,Q_1,s,t)$ consisting of
\begin{enumerate}
\item
a finite set $Q_0$ of {\bf vertices};
\item
a finite set $Q_1$ of {\bf arrows};
\item
a map $s:Q_1\to Q_0$, called the {\bf source function}, and a map $t:Q_1\to Q_0$,  called the {\bf target function}.
\end{enumerate}

We shall use $x,y,z,\cdots$ to denote the vertices in $Q_0$ and use $\alpha, \beta,\gamma,\cdots$ to denote the arrows in $Q_1$. For $\alpha \in Q_1$,  we call $s(\alpha)$ (resp. $t(\alpha)$) the {\bf source} (resp. {\bf target}) of $\alpha$. An arrow with a source $x$ and a target $y$ will be denoted by $\alpha:x\to y$, or simply by $x\overset{\alpha}{\to} y$. A {\bf homomorphism}  $f:(Q_0,Q_1,s,t)\to (Q'_0,Q'_1,s',t')$ between two quivers is a pair of maps $f_0:Q_0\to Q'_0$ and $f_1:Q_1\to Q'_1$ such that $s'\circ f_1=f_1\circ s$ and $t'\circ f_1=f_1\circ t$.

Let $Q:=(Q_0,Q_1,s,t)$ be a quiver.
\begin{enumerate}
\item
A {\bf path} in $Q$ is either a vertex $v\in Q_0$, called an {\bf empty path} or a {\bf trivial path} and denoted by $e_v$, or a sequence $p:=\alpha_1\alpha_2\cdots\alpha_k$ of arrows $\alpha_i\in Q_1$ for $1\leq i\leq k, k\geq 1$, and  $t(\alpha_i)=s(\alpha_{i+1})$ for $1\leq i\leq k-1$.
\item
For a nonempty path $p=\alpha_1\alpha_2\cdots\alpha_k$ with $\alpha_i\in Q_1$, $i=1,\cdots,k$. We call $s(p):=s(\alpha_1)$ the {\bf source} of $p$ and  $t(p):=t(\alpha_k)$ the {\bf target} of $p$.
\end{enumerate}
When there is no danger of confusion, we also denote a quiver by $Q=(Q_0,Q_1)$.

We denote by $\calp_Q$ the set of \emph{nonempty} paths in a quiver $Q$.
The {\bf reduced path algebra} of a quiver $Q$ is the free module $\bfk\, \calp$ equipped with the multiplication $p\cdot q=pq$ if $t(p)=s(q)$ and $p\cdot q=0$ otherwise. Note the difference between an empty path $e_v$ and the loop given by $(v,v)\in Q_1$. $e_v$ is an idempotent in the full path algebra, but is excluded from $\calp_Q$. Loops are allowed in $\calp_Q$.

There are natural correspondences between quivers and locality set, regarded as a relation, given by the digraph representation of a relation.

A quiver naturally defines a locality set.
\begin{defn}
	Let $Q=(Q_0,Q_1,s,t)$ be a quiver. Define
\begin{equation} \mlabel{eq:quiloc}
X_Q:=Q_1, \quad \mtop_{X_Q}:=\{(\alpha,\beta)\,|\,t(\alpha)=s(\beta),\alpha,\beta\in Q_1\}.
\end{equation}
	The resulting locality set
	$\mOmega_Q:=(\mOmega_Q,\mtop_{\mOmega_Q})$ is called the {\bf locality set of the quiver $Q$}.
	\mlabel{defn:quiloc}
\end{defn}

\begin{exam}Let $Q$ be the quiver
	$$  \xymatrix{& z&\\
		x \ar[r]^{\alpha} &y\ar[u]^{\beta} \ar@(ur,dr)[]^{\gamma}} $$	
	Then $X_Q=\{\alpha,\beta,\gamma\}$ and $\mtop_{X_Q}=\{(\alpha,\beta),(\alpha,\gamma),(\gamma,\gamma),(\gamma,\beta)\}$.
\mlabel{ex:quiloc}
\end{exam}

Furthermore, the paths of a quiver naturally defines a locality semigroup.
\begin{prop}
	Let $Q$ be a quiver and
	let $\calp:=\calp_Q$ be the set of nonempty paths in $Q$.  Denote
	$$\mtop_\calp:=\calp \times_\top \calp:=\{(p,q)\in \calp\times \calp\,|\, t(p)=s(q)\}.$$
	Define a partial binary operation
	$$\mu_\calp:\calp\times_\top\calp\to \calp, \quad (p,q)\mapsto pq, \quad \text{ for all } (p,q)\in \mtop_\calp.$$
	Then $\calp:=\calp_Q:=(\calp,\mtop_\calp,\mu_\calp)$  is a locality semigroup, called the {\bf path locality semigroup} of $Q$.
	\mlabel{prop:pathsg}
\end{prop}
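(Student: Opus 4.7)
The plan is to unpack the definitions, verify that concatenation of paths behaves as expected, and then read off both the locality closure property and the locality associative law from the fact that concatenation of finite sequences is genuinely associative.

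First I would check that $\mu_\calp$ is well-defined as a map $\calp \times_\top \calp \to \calp$. Given $(p,q) \in \mtop_\calp$, write $p=\alpha_1\cdots\alpha_k$ and $q=\beta_1\cdots\beta_\ell$. The compatibility $t(\alpha_i)=s(\alpha_{i+1})$ and $t(\beta_j)=s(\beta_{j+1})$ holds internally in $p$ and $q$, and the hypothesis $(p,q)\in\mtop_\calp$ gives the one remaining condition $t(\alpha_k)=s(p)=\ldots$ wait, rather $t(p)=s(q)$, i.e., $t(\alpha_k)=s(\beta_1)$. Hence the concatenated sequence $pq=\alpha_1\cdots\alpha_k\beta_1\cdots\beta_\ell$ is again a nonempty path in $Q$, with $s(pq)=s(\alpha_1)=s(p)$ and $t(pq)=t(\beta_\ell)=t(q)$. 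This makes $(\calp,\mtop_\calp,\mu_\calp)$ a locality magma and records the source/target formulas that drive everything else.

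Next I would verify the locality closure condition. Suppose $(p,q),(p,r),(q,r)\in\mtop_\calp$. The pair $(pq,r)$ lies in $\mtop_\calp$ exactly when $t(pq)=s(r)$; by the previous step $t(pq)=t(q)$, and $(q,r)\in\mtop_\calp$ gives $t(q)=s(r)$, so this holds. Symmetrically, $s(qr)=s(q)$ and $(p,q)\in\mtop_\calp$ gives $t(p)=s(q)$, so $(p,qr)\in\mtop_\calp$. (Only two of the three assumed pairs are actually needed, but the statement of Definition~\ref{defn:locsg} requires just this.)

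Finally, the locality associative law follows because concatenation of finite sequences of arrows is strictly associative: with $p,q,r$ as above and $r=\gamma_1\cdots\gamma_m$, both $(pq)r$ and $p(qr)$ equal the single path
\[
\alpha_1\cdots\alpha_k\beta_1\cdots\beta_\ell\gamma_1\cdots\gamma_m,
\]
whose internal compatibility conditions are ensured by $t(p)=s(q)$ and $t(q)=s(r)$. Since there is no real obstacle to any of these verifications, the only point requiring care is the bookkeeping of source and target functions under concatenation; once those identities are established, closure and associativity are immediate. The proposition therefore reduces to a careful but routine check, and I would present it in just a few lines using the source/target formulas $s(pq)=s(p)$, $t(pq)=t(q)$ as the central observation.
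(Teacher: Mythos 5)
Your proposal is correct and follows essentially the same route as the paper's proof: concatenation of compatible paths is again a path, the closure conditions $(pq,r),(p,qr)\in\mtop_\calp$ follow from the source/target bookkeeping, and the locality associative law is inherited from the strict associativity of concatenation. Your version simply spells out the identities $s(pq)=s(p)$, $t(pq)=t(q)$ that the paper leaves implicit.
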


\begin{proof}
	Let $p, q, r\in \calp$ with $(p,q), (p,r), (q,r)\in \mtop_\calp$. Then by the definition of  paths, $pq, qr$ are also paths in $\calp$ and $(pq,r), (p,qr)\in \mtop_\calp$. Further $(pq)r=p(qr)$ since the multiplication of paths is associative.
\end{proof}

In the other direction, given a locality set $\mOmega:=(\mOmega,\top)$, there is a natural quiver given by the diagraph $G=G_X$ of the relation $\top$, that is,
$G=(G_0,G_1,s_G,t_G)$ with the sets
$G_0=\mOmega, G_1=\top$ and the maps $s_G:\top \to X, (x,y)\mapsto x, t_G:\top \to X, (x,y)\mapsto y$.

We will define another quiver $Q_\mOmega:=(Q_0,Q_1,\sq,\tq)$ from $(\mOmega,\top)$ as follows, that better preserves the multiplicative structures. First define $Q_1:=\mOmega$. Next let
$\mOmega_s$ and $\mOmega_t$
be disjoint sets in bijection with $X$ by bijections
$$\sx:\mOmega\to \mOmega_s,\,\omega\mapsto \omega_s, \quad \tx:\mOmega\to \mOmega_t,\,\omega\mapsto \omega_t, \quad \omega\in X.$$
Define a relation $\sim$ on the disjoint union $\mOmega_s\sqcup \mOmega_t$ by
\begin{equation}
\alpha_t\sim \beta_s\quad \text{if}\quad (\alpha,\beta)\in\top.
\mlabel{eq:defnsim}
\end{equation}
Let $\equiv$ denote the equivalence relation on $\mOmega_s\sqcup \mOmega_t$ spanned by $\sim$. Then define $Q_0$ to be the quotient of the equivalence classes modulo $\equiv$:
$$ Q_0:=(\mOmega_s\sqcup \mOmega_t)/\!\!\equiv.$$
Let $\ol{\omega}$ denote the equivalence class of $\omega\in \mOmega_s\sqcup \mOmega_t$ modulo this equivalence relation and define
\begin{equation}
 \sq: Q_1\to Q_0, \alpha\mapsto \ol{\alpha_s}, \quad
\tq: Q_1\to Q_0, \alpha\mapsto \ol{\alpha_t}, \quad \alpha\in Q_1.
\mlabel{eq:locst}
\end{equation}

\begin{defn}
For a given locality set $X$, with the above data, the quadruple  $Q:=Q_\mOmega=(Q_0,Q_1,\sq,\tq)$ is called  the {\bf quiver of $\mOmega$}.
We then call the path locality semigroup $\calp=\calp_{Q_\mOmega}$ from this quiver defined in Proposition~\mref{prop:pathsg} the {\bf path locality semigroup of the locality set $\mOmega$}.
\mlabel{defn:locqui}
\end{defn}

We will show that path locality semigroup from a locality set satisfies a universal property in Theorem~\mref{thm:freeloc1}. In fact every path locality semigroup is isomorphic to a path locality semigroup from a locality set (Proposition~\mref{pp:pathsame}) and hence has the above universal property.

We first give an example to demonstrate the construction of the quiver of a locality set.
\begin{exam}
Let $X=\{\alpha,\beta\}$ and let $\mtop_X=\{(\alpha,\beta)\}$. Then $Q_1=X$. Let $\mOmega_s:=\{\alpha_s,\beta_s\}$ and $\mOmega_t:=\{\alpha_t,\beta_t\}$, with
the bijections
$$\sx: \mOmega\to \mOmega_s, \omega\mapsto \omega_s; \quad
\tx: \mOmega\to \mOmega_t, \omega\mapsto \omega_t, \quad \omega\in X.$$
Then  by $\mtop_X=\{(\alpha,\beta)\}$, we have $\sim=\{(\alpha_t,\beta_s)\}$ which spans the equivalence relation
$$\equiv \,=\{(\alpha_t,\beta_s), (\beta_s, \alpha_t), (\alpha_t,\alpha_t),(\beta_s,\beta_s),(\alpha_s,\alpha_s),(\beta_t,\beta_t)\}.$$
Thus
$$Q_0:=( \mOmega_s\sqcup  \mOmega_t)/\!\!\equiv\, =\{\ol{\alpha_t}(=\ol{\beta_s}),\ol{\alpha_s},\ol{\beta_t}\}.$$
Further the maps $s_Q, t_Q:Q_1\to Q_0$ in Eq.~\meqref{eq:locst} are defined by
$$s_Q(\alpha)=\overline{\alpha_s}, s_Q(\beta)=\overline{\beta_s}=\overline{\alpha_t},
t_Q(\alpha)=\overline{\alpha_t}, t_Q(\beta)=\overline{\beta_t}.$$
Then the quiver $Q_\mOmega=\{Q_0,Q_1,\sq,\tq\}$ of $X$ is
	$$  \xymatrix{& \overline{\beta_t}&\\
	\overline{\alpha_s} \ar[r]^(.4){\alpha} &\overline{\alpha_t}=\overline{\beta_s}\ar[u]^{\beta}}  $$	
\mlabel{exam:ab}
\end{exam}

A natural question is whether every locality relation can be obtained from a quiver in this way. In general this implication does not hold. To deal with the situation, we introduce a notion.

\begin{defn}
A locality set $X:=(X,\mtop_X)$ is called {\bf \mreg} if for all $ \alpha,\beta,\alpha_1,\beta_1\in X$, from $
(\alpha,\beta_1),(\alpha_1,\beta_1),(\alpha_1,\beta)\in \mtop_X$, we obtain $(\alpha,\beta)\in \mtop_X$.
\mlabel{defn:regls}
\end{defn}

The definition suggests the following first example.
\begin{exam}
Let $X=\{\alpha,\beta_1,\alpha_1,\beta)$ with
$\top_X=\{(\alpha,\beta_1), (\alpha_1,\beta), (\alpha_1,\beta)\}$. This is not \mreg. If $(\alpha,\beta)$ is added to $\top_X$, then the new locality set is \mreg.
$$ \xymatrix{\alpha \ar[dr] \ar@{.>}[drrr] & & \alpha_1 \ar[dl] \ar[dr] & \\
	& \beta_1 && \beta}
$$
\mlabel{ex:regset1}
\end{exam}
We give another example.
\begin{exam}
	Let $X=\{\alpha, \beta_1,\alpha_1\}$ and let $\mtop=\{(\alpha,\beta_1),(\alpha_1,\beta_1),(\alpha_1,\alpha_1)\}$.
This can be regarded as the degenerated case of Example~\mref{ex:regset1} when $\beta=\alpha_1$. Then $(X,\top)$ is not \mreg locality set. $(\alpha,\alpha_1)\}$ is added to $\top$, then $(X,\mtop)$ is \mreg.
$$ \xymatrix{\alpha \ar[dr] \ar@{.>}[rr]  & & \alpha_1\ar@(ur,dr) \ar[dl]  \\
	& \beta_1 & }
$$
\end{exam}

The \mregy condition is equivalent to the follow stronger statement.
\begin{lem}
A locality set $(X,\top_X)$ is \mreg if and only if,
for all $ n\in\NN^+$, and $ \alpha, \alpha_i, \beta, \beta_i,\in X,i=1,\cdots,n$,
when $(\alpha,\beta_1),(\alpha_1,\beta_1),(\alpha_1,\beta_2), (\alpha_2,\beta_2),\cdots,(\alpha_n,\beta_n),(\alpha_n,\beta)$ are in $\mtop_X$, then $ (\alpha,\beta)\in \mtop_X$.
\mlabel{lem:regseq}
\end{lem}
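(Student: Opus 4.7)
The statement is an \emph{if and only if}. The $(\Leftarrow)$ direction is immediate: specializing to $n=1$ recovers exactly Definition~\mref{defn:regls}. So the real content is the forward implication, for which I plan to proceed by induction on $n$.

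The base case $n=1$ is again exactly the defining condition of a \mreg locality set. For the inductive step, suppose we have a chain
\[
(\alpha,\beta_1),\,(\alpha_1,\beta_1),\,(\alpha_1,\beta_2),\,(\alpha_2,\beta_2),\,\ldots,\,(\alpha_n,\beta_n),\,(\alpha_n,\beta)\in\mtop_X
\]
of length $n\geq 2$. The key move is to collapse the front of the chain: apply the \mregy condition (Definition~\mref{defn:regls}) to the three pairs
\[
(\alpha,\beta_1),\quad (\alpha_1,\beta_1),\quad (\alpha_1,\beta_2)\in\mtop_X,
\]
which yields $(\alpha,\beta_2)\in\mtop_X$. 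Concatenating with the remaining portion of the chain produces a shorter valid chain
\[
(\alpha,\beta_2),\,(\alpha_2,\beta_2),\,(\alpha_2,\beta_3),\,\ldots,\,(\alpha_n,\beta_n),\,(\alpha_n,\beta)\in\mtop_X
\]
of length $n-1$, and the inductive hypothesis delivers $(\alpha,\beta)\in\mtop_X$.

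There is no real obstacle here; the only thing to verify carefully is that after the collapsing step the resulting sequence still matches the pattern of the statement with $\alpha$ kept as the left endpoint and $\beta$ as the right endpoint, so that the induction hypothesis applies cleanly. I expect the write-up to be short, essentially just the induction above with a one-line remark that the reverse direction is the case $n=1$.
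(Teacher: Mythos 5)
Your proposal is correct and follows essentially the same route as the paper: induction on $n$ with the base case being the definition, and the inductive step collapsing the first three pairs $(\alpha,\beta_1),(\alpha_1,\beta_1),(\alpha_1,\beta_2)$ into $(\alpha,\beta_2)$ to shorten the chain. The paper phrases the induction as going from $n=k$ to $k+1$ rather than downward, but the key move is identical.
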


\begin{proof}
We prove by induction on the integer $n\geq 1$ in the statement of the lemma, with $n=1$ being the definition. For any $k\geq 1$, assume that the lemma has been proved for $k=n$. Then for a string
$$(\alpha,\beta_1),(\alpha_1,\beta_1),(\alpha_1,\beta_2), (\alpha_2,\beta_2),\cdots,(\alpha_k,\beta_k),(\alpha_k,\beta_{k+1}), (\alpha_{k+1},\beta_{k+1}),(\alpha_{k+1},\beta)$$
in $\mtop_X$,
by definition, from the first three pairs in the string we obtain $(\alpha,\beta_2)\in \top_X$. Then replacing this pair with the first three pairs reduces to the case of $n=k$.
Then $ (\alpha,\beta)\in \mtop_X$, completing the induction.	
\end{proof}

Then we have
\begin{lem}
Let $X:=(X,\mtop_X)$ be a \mreg locality set. Let $Q:=Q_\mOmega=(Q_0,Q_1,\sq,\tq)$  be  the  quiver of $X$. Then for all $\alpha,\beta \in X(=Q_1)$,
$(\alpha,\beta)\in \mtop_X$ if and only if $\tq(\alpha)=\sq(\beta)$.
\mlabel{lem:locequiv}
\end{lem}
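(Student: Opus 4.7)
The plan is to translate the target statement into a question about the equivalence relation $\equiv$ on $\mOmega_s\sqcup\mOmega_t$ and then reduce to Lemma~\mref{lem:regseq}. By the formulas in~\meqref{eq:locst}, we have $\tq(\alpha)=\overline{\alpha_t}$ and $\sq(\beta)=\overline{\beta_s}$, so the condition $\tq(\alpha)=\sq(\beta)$ is precisely $\alpha_t\equiv\beta_s$. The forward implication then requires no hypothesis on $\mOmega$: assuming $(\alpha,\beta)\in\mtop_X$, the definition~\meqref{eq:defnsim} gives $\alpha_t\sim\beta_s$, hence $\alpha_t\equiv\beta_s$.

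For the reverse implication, suppose $\alpha_t\equiv\beta_s$. Since $\equiv$ is the equivalence relation generated by $\sim$, there is a finite chain
\[
\alpha_t=w_0,\; w_1,\; \ldots,\; w_n=\beta_s
\]
in which each consecutive pair $(w_i,w_{i+1})$ lies in $\sim\cup\sim^{-1}$. The key structural observation is that $\sim\subseteq\mOmega_t\times\mOmega_s$ by~\meqref{eq:defnsim}, so its symmetric closure always joins an element of $\mOmega_t$ to one of $\mOmega_s$. Consequently the two sets strictly alternate along the chain, and since $w_0\in\mOmega_t$ while $w_n\in\mOmega_s$, the length $n$ must be odd, say $n=2k+1$. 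Writing $w_{2i}=(\alpha_i)_t$ with $\alpha_0=\alpha$, and $w_{2i+1}=(\beta_{i+1})_s$ with $\beta_{k+1}=\beta$, each ``forward'' link $w_{2i}\sim w_{2i+1}$ records a pair $(\alpha_i,\beta_{i+1})\in\mtop_X$, while each ``backward'' link $w_{2i}\sim w_{2i-1}$ records $(\alpha_i,\beta_i)\in\mtop_X$. Concatenating these along the chain produces the sequence
\[
(\alpha,\beta_1),\;(\alpha_1,\beta_1),\;(\alpha_1,\beta_2),\;(\alpha_2,\beta_2),\;\ldots,\;(\alpha_k,\beta_k),\;(\alpha_k,\beta)
\]
of members of $\mtop_X$, which is exactly the alternating configuration appearing in the hypothesis of Lemma~\mref{lem:regseq} with the lemma's integer $n$ taken to be $k$. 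That lemma then delivers $(\alpha,\beta)\in\mtop_X$ (with the degenerate case $k=0$ being direct from~\meqref{eq:defnsim}).

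The main obstacle is purely bookkeeping: one must correctly track the orientation of each link in the chain, alternately $\sim$ and $\sim^{-1}$, to confirm that the extracted sequence of $\mtop_X$-pairs is precisely of the shape that Lemma~\mref{lem:regseq} is designed to handle. Once the bipartite structure of $\sim$ on $\mOmega_s\sqcup\mOmega_t$ is noted and this alternation is read off, the conclusion is an immediate consequence of the generalized \mregy property established there.
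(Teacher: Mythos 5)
Your proof is correct and follows essentially the same route as the paper: the forward direction is immediate from the definition of $\sim$ in Eq.~\meqref{eq:defnsim}, and the reverse direction unwinds the generated equivalence $\equiv$ into an alternating chain of $\sim$ and its inverse, which is then fed into Lemma~\mref{lem:regseq} using \mregy. Your explicit parity/bipartiteness bookkeeping and separate treatment of the degenerate one-link case are just slightly more detailed versions of steps the paper states without elaboration.
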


\begin{proof}
($\Longrightarrow$) If $(\alpha,\beta)\in\top$, then $\alpha_t\sim \beta_s$ by Eq.~(\mref{eq:defnsim}). Since  $\equiv$ is spanned by $\sim$, we have $(\alpha_t,\beta_s)\in\,\, \equiv$. Then $\ol{\alpha_t}=\ol{\beta_s}$, and so $\tq(\alpha)=\ol{\alpha_t}=\ol {\beta_s}=\sq(\beta)$.

\smallskip

\noindent
($\Longleftarrow$) Let $\alpha,\beta \in Q_1=X$. Suppose that  $\tq(\alpha)=\sq(\beta)$. Then $\ol{\alpha_t}=\ol{\beta_s}$, and so $\alpha_t\equiv \beta_s$.

Note that $\equiv$ is the equivalence relation spanned by $\sim$, and $\sim$ only relates an element of $X_s$ and $X_t$. Denote the inverse relation $\sim'$ of $\sim$ by $y_t\sim' x_s$ if $x_s\sim y_t$ for $x, y\in X$.
Then $\alpha_t\equiv \beta_s$ means that $\alpha$ and $\beta$ are connected by an alternating string of $\sim$ and $\sim'$. More precisely, there exist $n\geq 1$ and  $\alpha_i, \beta_i, 1\leq i\leq n,$ such that,

$$ \alpha_t\sim \beta_{1,s} \sim' \alpha_{1,t} \sim \beta_{2,s}\sim'\alpha_{2,t} \cdots \sim \beta_{n,s} \sim' \alpha_{n,t} \sim \beta_s.$$
That is, the pairs
$$ (\alpha,\beta_1),(\alpha_1,\beta_1), (\alpha_1,\beta_2), (\alpha_2,\beta_2), \cdots, (\alpha_n,\beta_n), (\alpha_n,\beta)$$
are in $\top_X$.
Since $(X,\top_X)$ is \mreg, Lemma~\mref{lem:regseq} yields $(\alpha,\beta)\in \mtop_X.$ This completes the proof.
\end{proof}

\begin{prop}
For any quiver $Q:=(Q_0,Q_1,s,t)$, the locality set $X_Q:=(X_Q,\mtop_{X_Q})$  of $Q$ as defined in Definition~\mref{defn:quiloc} is a \mreg locality set.
\mlabel{prop:quireg}
\end{prop}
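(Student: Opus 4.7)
The plan is to verify directly from the definitions that $X_Q$ satisfies the closedness axiom of Definition~\ref{defn:regls}. Concretely, I start with four elements $\alpha, \beta, \alpha_1, \beta_1 \in X_Q = Q_1$ such that the three pairs
$(\alpha, \beta_1), (\alpha_1, \beta_1), (\alpha_1, \beta)$
all lie in $\top_{X_Q}$, and I need to deduce $(\alpha, \beta) \in \top_{X_Q}$.

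By the very definition of $\top_{X_Q}$ in Eq.~\eqref{eq:quiloc}, these three membership conditions unpack as the three equalities $t(\alpha) = s(\beta_1)$, $t(\alpha_1) = s(\beta_1)$, and $t(\alpha_1) = s(\beta)$ among vertices of $Q_0$. Chaining these equalities gives
\[
t(\alpha) = s(\beta_1) = t(\alpha_1) = s(\beta),
\]
so that $t(\alpha) = s(\beta)$, and therefore $(\alpha, \beta) \in \top_{X_Q}$ by the definition of $\top_{X_Q}$ once more. This shows $X_Q$ is \mreg.

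There is essentially no obstacle here: the proof is a one-line transitivity argument on equalities in $Q_0$, and the closedness axiom is tailored precisely so that relations of the ``equality of endpoints'' type satisfy it automatically. The only thing worth being careful about is to interpret both the hypothesis and the conclusion correctly through the unpacking of $\top_{X_Q}$, and to note that no finiteness or non-degeneracy assumption on $Q$ is needed.
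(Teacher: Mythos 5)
Your proof is correct and follows essentially the same route as the paper: unpack $\top_{X_Q}$ as the endpoint condition $t(\cdot)=s(\cdot)$ and chain the resulting equalities in $Q_0$. The only difference is cosmetic — the paper writes out the argument for an alternating chain of arbitrary length (the stronger condition of Lemma~\ref{lem:regseq}), whereas you verify exactly the three-pair condition of Definition~\ref{defn:regls}, which is all the statement requires.
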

\begin{proof}
By Definition~\mref{defn:quiloc}, we obtain
 $$\mtop_{X_Q}=\{(\alpha,\beta)\,|\,t(\alpha)=s(\beta)\}\subseteq X_Q\times X_Q.$$
If $(\alpha,\beta_1),(\alpha_1,\beta_1),(\alpha_1,\beta_2)\cdots,(\alpha_\ell,\beta_\ell),(\alpha_\ell,\beta)\in\mtop_{X_Q}$ for $\ell\geq 1$, then
$$t(\alpha)=s(\eta_1)=t(\gamma_1)=s(\eta_2)\cdots=s(\eta_\ell)=t(\gamma_\ell)=s(\beta).$$
Hence $t(\alpha)=s(\beta)$, and so $(\alpha,\beta)\in\mtop_{X_Q}.$
\end{proof}

This shows that the locality set of every quiver $Q$ is \mreg. The converse of Proposition~\mref{prop:quireg}, that is, every \mreg locality set $(X,\mtop)$ comes from the locality set of a quiver $Q$, will be proved in Theorem~\mref{thm:qxq}~(\mref{it:xqx}).

We next show that any locality set can be embedded into a \mreg quiver.

\begin{defn} \mlabel{de:regenv}
	Let $X=(X,\top)$ be a locality set. A {\bf \mreg hull} of $X$ is a \mreg locality set $X^\reg$ with a locality homomorphism $\phi:X\to X^\reg$ satisfying the following universal property: for every \mreg locality set $X'$ and locality set homomorphism $\psi:X\to X'$, there is a unique locality set homomorphism $\widetilde{\psi}:X^\reg \to X'$ such that $\widetilde{\psi}\circ \phi=\psi$.
	$$ \xymatrix{ & X \ar[dd]^{\phi} \ar[dl]^\psi \\
		X'  \\
		& X^\reg \ar@{.>}[ul]^{\widetilde{\psi}} }
	$$
\end{defn}

Let $X=(X,\top_X)$ be a locality set. By construction of the locality set $X_{Q_X}=(X_{Q_X},\top_{X_{Q_X}})$, we have $X_{Q_X}=X$ as sets. Further, the locality relation $\top_{X_{Q_X}}$ is the alternating transitive closure of $\top$ and hence contains $\top$. Thus we have the natural injection of the locality sets
$$ \phi=\phi_X: (X,\top_X)\to (X_{Q_X},\top_{X_{Q_X}})$$
that is the identity of the sets and the inclusion on the relations.

\begin{prop}
	Let $X$ be a locality and let $X_{Q_X}$ be the locality set from the quiver $Q_X$ from Definition~\mref{defn:quiloc}.
	Then with the locality homomorphism $\phi_X:X\to X_{Q_X}$ defined above,  $X_{Q_X}$ is a \mreg hull of $X$.
\end{prop}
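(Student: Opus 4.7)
The plan is to verify the two clauses in Definition~\ref{de:regenv}: that $X_{Q_X}$ is \mreg, and that the pair $(X_{Q_X},\phi_X)$ enjoys the stated universal property.

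First I would dispatch \mregy simply by quoting Proposition~\ref{prop:quireg}: the locality set of any quiver (here $Q_X$) is automatically \mreg. So the content of the proposition lies in the universal property.

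For the universal property, fix a \mreg locality set $(X',\top_{X'})$ and a locality homomorphism $\psi\colon X\to X'$. Because $X_{Q_X}=X$ as underlying sets and $\phi_X$ is the identity map at the level of sets, the candidate $\widetilde\psi$ is forced: set $\widetilde\psi:=\psi$ as a function. This immediately gives $\widetilde\psi\circ\phi_X=\psi$ and simultaneously pins down uniqueness. The only nontrivial point is to check that $\widetilde\psi$ is a \emph{locality} homomorphism, i.e.\ that $\widetilde\psi$ preserves the possibly enlarged relation $\top_{X_{Q_X}}$.

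So suppose $(\alpha,\beta)\in \top_{X_{Q_X}}$. By Definition~\ref{defn:quiloc} this means $t_Q(\alpha)=s_Q(\beta)$, i.e.\ $\overline{\alpha_t}=\overline{\beta_s}$ in $Q_0=(X_s\sqcup X_t)/\!\equiv$. Unwinding the equivalence $\equiv$ spanned by $\sim$ exactly as in the second half of the proof of Lemma~\ref{lem:locequiv}, there exist $n\ge 0$ and elements $\alpha_1,\beta_1,\dots,\alpha_n,\beta_n\in X$ such that
\[
(\alpha,\beta_1),\,(\alpha_1,\beta_1),\,(\alpha_1,\beta_2),\,(\alpha_2,\beta_2),\,\dots,\,(\alpha_n,\beta_n),\,(\alpha_n,\beta)\in \top_X
\]
(the case $n=0$ being the trivial chain $(\alpha,\beta)\in\top_X$). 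Applying the locality homomorphism $\psi$ to every pair of this string yields a corresponding string in $\top_{X'}$. Since $(X',\top_{X'})$ is \mreg, Lemma~\ref{lem:regseq} produces $(\psi(\alpha),\psi(\beta))\in\top_{X'}$, which is exactly $(\widetilde\psi(\alpha),\widetilde\psi(\beta))\in\top_{X'}$. Hence $\widetilde\psi$ is a locality homomorphism, completing the verification.

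The main obstacle, modest though it is, is the translation step: identifying the enlarged relation $\top_{X_{Q_X}}$ with the alternating transitive closure of $\top_X$ in order to invoke \mregy of the target. This identification is essentially the content of Lemma~\ref{lem:locequiv}, so once it is cited the remainder of the argument is formal.
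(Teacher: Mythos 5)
Your proposal is correct and follows essentially the same route as the paper's proof: quote Proposition~\ref{prop:quireg} for \mregy, take $\widetilde\psi=\psi$ on the common underlying set (which forces uniqueness), and show it preserves $\top_{X_{Q_X}}$ by unwinding an element of the enlarged relation into an alternating string in $\top_X$, pushing it through $\psi$, and invoking \mregy of the target via Lemma~\ref{lem:regseq}. Your explicit citation of Lemma~\ref{lem:regseq} is if anything slightly more careful than the paper, which just appeals to \mregy directly.
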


\begin{proof}
We have shown in Proposition~\mref{prop:quireg} that $X_{Q_X}$ is \mreg. So we just need to verify that $X_{Q_X}$, together with the natural inclusion $\phi_X$, satisfies the universal property in Definition~\mref{de:regenv}. Let $X'=(X',\top')$ and let $\psi:X\to X'$ be a locality map. Since $X_{Q_X}=X$ as a set, we can uniquely define $\tilde{\psi}: X^\reg \to X'$ simply to be $\phi$. Further, any pair $(\alpha,\beta)\in \top_{X_{Q_X}}$ comes from an alternating string of pairs in $\top_X$ of the form
$$ (\alpha,\beta_1),(\alpha_1,\beta_1), (\alpha_1,\beta_2), (\alpha_2,\beta_2), \cdots, (\alpha_n,\beta_n), (\alpha_n,\beta).$$
Since $\psi:X\to X'$ is a locality map, this string gives an alternating string of pairs in $\top'$ of the form
$$ (\alpha',\beta_1'),(\alpha_1',\beta_1'), (\alpha_1',\beta_2'), (\alpha_2',\beta_2'), \cdots, (\alpha_n',\beta_n'), (\alpha_n',\beta').$$
Here for clarity, we denote $\gamma'=\psi(\gamma)$ for all $\gamma\in \top_X$.
Since $X'$ is \mreg, we have
$(\psi(\alpha),\psi(\beta))=(\alpha',\beta')\in \top'$. Thus
$(\psi\times \psi)(\top_{X_{Q_X}}) \subseteq X'\times X'$ and hence $\tilde{\psi}$ is a locality homomorphism. By construction, this is also the unique locality homomorphism from $X^\reg$ to $X'$ extending $\psi$. This completes the verification of the desired universal property.
\end{proof}

The following result states that the path locality semigroup of a quiver $Q$ does not change when the quiver is replaced by the quiver of the locality set $X_Q$.
\begin{prop} Let $Q$ be a quiver and let $X_Q$ be the locality set of $Q$.
Let $Q_{X_Q} $ be the quiver of $X_Q$. Then $\calp_{Q_{X_Q}}=\calp_Q$.
	\mlabel{pp:pathsame}
\end{prop}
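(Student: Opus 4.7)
The plan is to show that $\calp_{Q_{X_Q}}$ and $\calp_Q$ are literally the same locality semigroup, not merely isomorphic, by exhibiting identical underlying sets and identical partial multiplications. The starting observation is that, by Definition~\mref{defn:quiloc}, $X_Q = Q_1$ as sets, and by Definition~\mref{defn:locqui}, the arrow set of the quiver $Q_{X_Q}$ is $X_Q$. Hence $Q$ and $Q_{X_Q}$ share the same arrow set $Q_1$, even though they may have very different vertex sets and different source/target maps.

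The heart of the argument is to prove that a finite sequence of arrows $\alpha_1\alpha_2\cdots\alpha_k$ in $Q_1$ is a nonempty path in $Q_{X_Q}$ if and only if it is a nonempty path in $Q$. Since both definitions of path reduce to a compatibility condition on consecutive pairs, this amounts to showing
\[
t_Q(\alpha_i) = s_Q(\alpha_{i+1}) \;\Longleftrightarrow\; t(\alpha_i) = s(\alpha_{i+1}) \qquad (1 \le i \le k-1).
\]
Here I would invoke Proposition~\mref{prop:quireg}, which tells us that $X_Q$ is a \mreg locality set, and therefore Lemma~\mref{lem:locequiv} applies to give the equivalence $t_Q(\alpha_i) = s_Q(\alpha_{i+1}) \Leftrightarrow (\alpha_i,\alpha_{i+1}) \in \mtop_{X_Q}$. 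By the explicit description in Eq.~\meqref{eq:quiloc}, the right-hand side is simply $t(\alpha_i) = s(\alpha_{i+1})$, closing the equivalence.

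Having established that the underlying sets $\calp_{Q_{X_Q}}$ and $\calp_Q$ coincide, it remains to check that the partial multiplications agree. But both $\mu_{\calp_{Q_{X_Q}}}$ and $\mu_{\calp_Q}$ are defined by concatenation of arrow sequences whenever the target of the left factor matches the source of the right factor, and we have just seen that the two matching conditions are identical. The locality relations $\mtop_{\calp_{Q_{X_Q}}}$ and $\mtop_{\calp_Q}$ therefore coincide, and on this common domain both operations produce the same concatenated sequence.

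The only potential obstacle is conceptual rather than technical: one must resist the temptation to view this as merely an isomorphism, and instead keep track of the fact that the arrow-level identification is literal. Once Lemma~\mref{lem:locequiv} is brought to bear (which is available precisely because Proposition~\mref{prop:quireg} supplies the \mregy hypothesis), the proof is immediate.
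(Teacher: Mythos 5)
Your proposal is correct and follows essentially the same route as the paper: identify the common arrow set $Q_1=X_Q=Q'_1$, use Proposition~\mref{prop:quireg} to secure the \mregy hypothesis, and apply Lemma~\mref{lem:locequiv} to show the path condition $t_Q(\alpha)=s_Q(\beta)$ coincides with $t(\alpha)=s(\beta)$. The extra remarks about the multiplications being concatenation on the same domain are just a more explicit spelling-out of the paper's final sentence.
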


\begin{proof}
	For a quiver $Q=(Q_0,Q_1,s,t)$, the corresponding locality set $(X_Q,\top_{X_Q})$ is defined by $X_Q=Q_1$ and
	$$\top_{X_Q}=\{(\alpha,\beta)\in X_Q\times X_Q\,|\, t(\alpha)=s(\beta)\}.$$
	Further, for the quiver $Q_{X_Q}=(Q_0',Q_1',s',t')$ of $X_Q$, we have $Q_1'=X_Q=Q_1$, and by Proposition~\mref{prop:quireg}, $X_Q$ is a \full locality set. Then by Lemma~\mref{lem:locequiv}, $t'(\alpha)=s'(\beta)$ if and only if $(\alpha,\beta)\in \top_{X_Q}$. Hence $t(\alpha)=s(\beta)$ if and only if $t'(\alpha)=s'(\beta)$. Therefore the paths of $Q$ are the same as the paths of $Q_{X_Q}$.
\end{proof}

\section{One-to-one correspondence between \mreg locality sets and quivers}
\mlabel{ss:all}
From any quiver $Q$, we obtain the locality set $X_Q$ of the quiver $Q$ defined in Definition~\mref{defn:quiloc} which further gives the quiver $Q_{\mOmega_Q}$ from the locality set $\mOmega_Q$ defined in Definition~\mref{defn:locqui}. We will characterize the derived quiver $Q_{X_Q}$.

We use an example to illustrate the relationship between the quivers $Q$ and $Q_{X_Q}$.

\begin{exam} \mlabel{ex:qlq1}
	Let $Q=(Q_0, Q_1)$ be defined by $Q_0=\{x,y,z,w\}$  and $Q_1=\{\alpha: x\to z, \beta: y\to z\}$.
	$$ \xymatrix{x \ar[r]^{\alpha} & z &w\\
		&y \ar[u]^{\beta} & }
	$$
	The corresponding locality set is $(X_Q,\mtop_{X_Q})$ defined by $X_Q:=Q_1$ and $$\mtop_{X_Q}:=\{(\sigma,\tau)\in Q_1\times Q_1\,|\, t(\sigma)=s(\tau)\}=\emptyset. $$
	Thus for $Q':=Q_{\mOmega_Q}=(Q'_0,Q'_1,s',t')$, we have $Q'_1=X=Q_1$ and
	$$Q'_0:=(X_s\sqcup X_t)/\!\!\equiv\,=X_s\sqcup X_t$$
	since the locality relation $\mtop_{X_Q}$ is empty.
	Thus denoting $X_s=\{x,y\}$ and $X_t=\{z_1,z_2\}$, we have $Q'_0=\{z_1,z_2,x,y\}$ and $Q'_1=\{\alpha:x \to z_1, \beta: y\to z_2\}$.
	$$ \xymatrix{x \ar[r]^{\alpha} & z_1&z_2 \\
		&& \ar[u]^{\beta} y}$$	
\end{exam}

Intuitively, $Q'$ is obtained from $Q$ by removing the isolated vertex $w$ and decoupling the multi-targeted vertex $z$ into two vertices $z_1$ and $z_2$. We give a general notion for this type of quivers.

\begin{defn}
	A quiver $Q=(Q_0,Q_1,s,t)$ is called {\bf \full} if for every $x\in Q_0$, we have
	\begin{enumerate}
		\item $|s^{-1}(x)\cup t^{-1}(x)|\geq 1$. In other words, $Q_0=\mim (s)\cup \mim(t)$, that is, every vertex $x\in Q_0$ is either the source or target of some arrow;
		\mlabel{it:fq1}
		\item if $|s^{-1}(x)\cup t^{-1}(x)|\geq 2$, then
		$x\in \mim (s)\cap \mim(t)$.
		\mlabel{it:fq2}
	\end{enumerate}
	\mlabel{defn:fullq}
\end{defn}

Intuitively, a quiver is \full if every vertex is either the source or target of at least one arrow, and the vertex is the sources or targets of multiple arrows only if it is both a source and a target (for possibly different arrows). To put it in another way, a quiver $Q$ is \full if every vertex of $Q$ is connected to at least one arrow, and is connected to two or more arrows only if it is both the source and target of some arrows.

To understand the general situation better, we look at another example.

\begin{exam}
	Let $Q=(Q_0,Q_1)$ be the quiver defined by $Q_0=\{x,y,z,w\}$ and $Q_1=\{\alpha:x\to z, \beta:y\to z, \gamma:z\to w\}$
	$$ \xymatrix{x \ar[r]^{\alpha} & z \ar[r]^\gamma &w\\
		&y \ar[u]^{\beta} & }
	$$
This quiver is \full.
For its derived quiver, we have
	$$ \mtop_{X_Q}=\{(\alpha,\gamma), (\beta,\gamma)\}.$$
	So for $Q':=Q_{X_Q}=(Q'_0,Q'_1,s',t')$, we have
	$Q'_1=X_Q=Q_1=\{\alpha,\beta,\gamma\}$.
	$$ X_s\sqcup X_t=\{\alpha_s,\beta_s,\gamma_s,\alpha_t,\beta_t,\gamma_t\}$$
	on which the relation $\sim$ define from $\mtop_{X_Q}$ is
	$$\sim =\{(\alpha_t,\gamma_s),(\beta_t,\gamma_s)\}.$$
	Hence the equivalence classes of the equivalence relation $\equiv$ spanned by $\sim$ are the connected components of $X_s\sqcup X_t$ from $\top$:
	$$ Q'_0:=(X_s\sqcup X_t)/\!\!\equiv\, =\{ \overline{\alpha_s}, \overline{\beta_s}, \overline{\gamma_s}(=\overline{\alpha_t}=\overline{\beta_t}), \overline{\gamma_t}\}.$$
	Then together with $s', t'$, the quiver $Q'$ is depicted in the diagram:
	$$ \xymatrix{\overline{\alpha_s} \ar[r]^{\alpha} & \overline{\gamma_s} \ar[r]^\gamma &\overline{\gamma_t}\\
		&\overline{\beta_s} \ar[u]^{\beta} & }
	$$
	It is isomorphic to the original $Q$.
\end{exam}

We next show that any quiver can be lifted to a \full quiver.

\begin{defn} \mlabel{de:regcover}
	Let $Q$ be a quiver. A {\bf \full cover} of $Q$ is a \full quiver $Q^\ful$ with a quiver homomorphism $\phi:Q^\ful\to Q$ satisfying the following universal property: for every \full quiver $Q'$ and quiver homomorphism $\psi:Q'\to Q$, there is a unique quiver homomorphism $\widetilde{\psi}:Q' \to Q^\ful$ such that $\phi\circ \widetilde{\psi}=\psi$.
	$$ \xymatrix{ & Q^\ful \ar[dd]^{\phi} \\
		Q'\ar[ur]^{\widetilde{\psi}} \ar[dr]^{\psi} \\
		& Q  }
	$$
\end{defn}

We now show that the quiver $Q_{X_Q}$ is a \full cover of $Q$. To simplify the notations, denote $Q_{X_Q}=Q^*=(Q_0^*,Q_1^*,s^*,t^*)$. Then following Definitions~\mref{defn:quiloc} and \mref{defn:locqui}, we have
$$X:=X_Q=Q_1, \quad Q^*_1=X_Q=Q_1.$$
Also take $X_s=\{\omega_s\,|\,\omega\in Q_1\}$ and $X_t=\{\omega_t\,|\,\omega\in Q_1\}$ in bijection with $X$ and mutually disjoint. Define  $\equiv$ to be the equivalence relation on $X_s\sqcup X_t$ spanned by the relation
$$ \sim: =\{(u,v):=(\alpha_t,\beta_s)\in (X_s\sqcup X_t)\times (X_s\sqcup X_t)\,|\, t(\alpha)=s(\beta) \text{ for some }\alpha,\beta\in Q_1\}.$$
Then we have
$$ Q_0^*:= (X_s\sqcup X_t)/\!\!\equiv. $$

Now we define a quiver homomorphism $\phi=\phi_Q:=(\phi_0,\phi_1):Q^*\to Q$ by
$$ \phi_1: Q_1^*=Q_1\longrightarrow Q_1, \quad \alpha\mapsto \alpha, \tforall \alpha\in Q_1^*,$$
$$ \phi_0: Q_0^*=(X_s\sqcup X_t)/\!\!\equiv\, \longrightarrow Q_0, \quad \ol{u}\mapsto  \left\{\begin{array}{ll} s(\alpha),  & \text{if } u=\alpha_s\in X_s, \\
	t(\alpha), & \text{if } u=\alpha_t\in X_t \end{array} \right .
$$
Thus $\phi_1$ is simply the identity map. To show that $\phi_0$ is well defined, we first check that, if $\ol{u}, \ol{v}\in Q_0^*$ with $u\sim v$, then $\phi_0(\ol{u})=\phi_0(\ol{v})$. But $u\sim v$ means that $u=\alpha_t, v=\beta_s$ and $t(\alpha)=s(\beta)$. Hence $\phi_0(\ol{u})=t(\alpha)=s(\beta)=\phi_0(\ol{v})$. Since being equal is an equivalence relation and $\equiv$ is the equivalent relation spanned by $\sim$, we have $\phi_0(\ol{u})=\phi_0(\ol{v})$ if $u\equiv v$. Hence $\phi_0$ is well  defined.

\begin{prop}
	Let $Q$ be a quiver and let $Q_{X_Q}$ be the quiver from the locality set $X_Q$ from Definitions~\mref{defn:quiloc} and \mref{defn:locqui}.
	Then with the quiver homomorphism $\phi_Q:Q_{X_Q}\to Q$ defined above,  $Q_{X_Q}$ is a \full cover of $Q$.
\end{prop}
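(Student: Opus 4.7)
The plan is to verify first that $Q^*:=Q_{X_Q}$ is itself \full, then to construct the required lift $\widetilde{\psi}$ and check its uniqueness. For \fuly of $Q^*$, note that $Q_0^*=(X_s\sqcup X_t)/\!\!\equiv$, so every class $\ol{u}$ contains some $\alpha_s$ or $\alpha_t$ with $\alpha\in Q_1^*=Q_1$; the arrow $\alpha$ then witnesses $\ol{u}\in\mim(s^*)\cup\mim(t^*)$, which is condition~(\mref{it:fq1}). For condition~(\mref{it:fq2}), if $|(s^*)^{-1}(\ol{u})\cup(t^*)^{-1}(\ol{u})|\ge 2$, two distinct arrows in this union contribute two distinct labels of $\ol{u}$ in $X_s\sqcup X_t$, so $|\ol{u}|\ge 2$. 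Because the spanning relation $\sim$ only connects an $X_t$-element with an $X_s$-element, any non-singleton equivalence class must meet both $X_s$ and $X_t$; hence $\ol{u}\in\mim(s^*)\cap\mim(t^*)$.

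For the universal property, let $\psi=(\psi_0,\psi_1):Q'\to Q$ be a quiver homomorphism with $Q'$ \full. Since $\phi_1$ is the identity on $Q_1$, any lift $\widetilde{\psi}$ must have $\widetilde{\psi}_1=\psi_1$, and compatibility of $\widetilde{\psi}$ with the source/target maps forces
$$
\widetilde{\psi}_0(s'(\alpha'))=\ol{\psi_1(\alpha')_s},\qquad \widetilde{\psi}_0(t'(\alpha'))=\ol{\psi_1(\alpha')_t},\qquad \alpha'\in Q_1'.
$$
By condition~(\mref{it:fq1}) of \fuly of $Q'$, every vertex of $Q_0'$ lies in $\mim(s')\cup\mim(t')$, so these equations determine $\widetilde{\psi}_0$, proving uniqueness; I take them as the definition.

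The main obstacle is consistency of this definition when a vertex $x$ of $Q'$ is incident to several arrows. The key case is a mixed incidence $s'(\eta_1')=t'(\eta_2')=x$: then $s(\psi_1(\eta_1'))=\psi_0(x)=t(\psi_1(\eta_2'))$, so $(\psi_1(\eta_2'),\psi_1(\eta_1'))\in\top_{X_Q}$, which by definition of $\sim$ yields $\ol{\psi_1(\eta_2')_t}=\ol{\psi_1(\eta_1')_s}$. If instead $\eta_1',\eta_2'$ both have $x$ as source (or both as target), condition~(\mref{it:fq2}) of \fuly of $Q'$ supplies a third arrow $\gamma'$ incident to $x$ on the opposite side; applying the mixed-incidence argument to $(\eta_i',\gamma')$ (resp.\ $(\gamma',\eta_i')$) for $i=1,2$ shows that both formulas yield the common class $\ol{\psi_1(\gamma')_t}$ (resp.\ $\ol{\psi_1(\gamma')_s}$). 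Once $\widetilde{\psi}_0$ is well defined, the homomorphism property is built into the defining formulas, and the triangle $\phi_Q\circ\widetilde{\psi}=\psi$ follows because $\phi_1\circ\widetilde{\psi}_1=\psi_1$ trivially and $\phi_0(\ol{\psi_1(\alpha')_s})=s(\psi_1(\alpha'))=\psi_0(s'(\alpha'))$, and symmetrically for $t$.
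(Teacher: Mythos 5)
Your proof is correct, and the verification of the universal property is essentially the paper's argument: $\widetilde{\psi}_1=\psi_1$ is forced because $\phi_1$ is the identity, $\widetilde{\psi}_0$ is defined on $\mim(s')\cup\mim(t')=Q_0'$ by the forced formulas $\widetilde{\psi}_0(s'(\sigma))=\ol{\psi_1(\sigma)_s}$, $\widetilde{\psi}_0(t'(\sigma))=\ol{\psi_1(\sigma)_t}$, well-definedness reduces via condition~(\mref{it:fq2}) to the mixed case $s'(\sigma)=t'(\tau)$, where the homomorphism property of $\psi$ gives $\psi_1(\tau)_t\sim\psi_1(\sigma)_s$, and uniqueness follows from the same forced formulas together with condition~(\mref{it:fq1}). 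Your treatment of the same-side case (invoking a third arrow $\gamma'$ on the opposite side and applying the mixed argument twice) just makes explicit a step the paper compresses into one sentence. The genuine difference is that you also prove directly that $Q_{X_Q}$ is \full, which the definition of a \full cover requires but which the paper's proof of this proposition does not address; the paper establishes it only later, in Theorem~\mref{thm:qxq}~(\mref{it:qx}), via the characterization of \full quivers in Proposition~\mref{prop:fullq} (surjectivity of $\pi$ and the identification of the fiber relation with the relation spanned by $\sim$). Your direct argument — every class contains some $\alpha_s$ or $\alpha_t$, and since $\sim$ only links $X_t$-elements to $X_s$-elements any class with two or more elements must meet both $X_s$ and $X_t$, hence lies in $\mim(s^*)\cap\mim(t^*)$ — is sound and makes the proposition self-contained, at the modest cost of redoing work that the paper's later characterization packages once and reuses.
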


\begin{proof}
	Let $Q'=(Q_0',Q_1',s',t')$ be a \full quiver and let $\psi=(\psi_0,\psi_1): Q'\to Q$ be a quiver homomorphism. We define a quiver homomorphism
	$$\widetilde{\psi}=(\widetilde{\psi}_0,\widetilde{\psi}_1): Q'\to Q^*$$
	as follows. First noting that $Q_1^*=Q_1$, we can simply define
	$$\widetilde{\psi}_1=\psi_1:Q_1'\to Q_1^*.$$
	
	Next, since $Q'$ is \full, we have $Q_0'=\mim (s')\cup \mim (t')$. Thus we can define the correspondence
	$$ \widetilde{\psi}_0: Q'_0\to Q_0^*=(X_s\sqcup X_t)/\!\!\equiv , \quad u\mapsto \left\{\begin{array}{ll}
		\overline{{\psi_1(\sigma)}_s}, &\text{ if } u=s'(\sigma)\in \mim (s'), \sigma\in Q'_1, \\
		\overline{{\psi_1(\sigma)}_t}, & \text{ if } u=t'(\sigma)\in \mim(t'), \sigma\in Q'_1.
	\end{array} \right .
	$$
	We verify that the correspondence is well-defined and hence defines a function. First if $u=s'(\sigma)=s'(\tau)$ or $u=t'(\sigma)=t'(\tau)$ for distinct $\sigma, \tau\in Q'_1$, then by \fuly, $u$ is in $\mim(s')\cap \mim(t')$.
	Thus we only need to show the compatibility when $u$ is in the intersection $\mim(s')\cap \mim(t')$. The latter means $u=s'(\sigma)=t'(\tau)$. Then $s(\psi_1(\sigma))=\psi_1(s'(\sigma))=\psi_1(t'(\tau))=t(\psi_1(\tau))$. This means that $\psi_1(\tau)_t \sim\psi_1(\sigma)_s$ and hence $\overline{{\psi_1(\sigma)}_s}=\overline{{\psi_1(\tau)}_t}$, as needed. In summary, we obtain a quiver homomorphism $\widetilde{\psi}:Q'\to Q^*$. By the construction, we have $\phi\circ \widetilde{\psi}=\psi$.

Finally suppose that a quiver homomorphism $\tilde{\tilde{\psi}}:Q'\to Q^*$ also satisfies $\phi\circ \tilde{\tilde{\psi}}=\psi$. Then from $\phi_1:Q_1^*=Q_1\to Q_1$ is the identity we have
	$$  \tilde{\tilde{\psi}}_1=\phi_1\circ \tilde{\tilde{\psi}}_1 =\psi_1=\phi_1 \circ \tilde{\psi}_1=\tilde{\psi}_1.$$
	Also, we must have $\tilde{\tilde{\psi}}_1=\psi_1=\tilde{\psi}_1$. Consequently,
	$$ \tilde{\tilde{\psi}}_0(s'(\sigma)) =s^*(\tilde{\tilde{\psi}}_1(\sigma))
	= s^*(\tilde{\psi}_1(\sigma))
	= \tilde{\psi}_0(s'(\sigma)),$$
	and similarly,
	$\tilde{\tilde{\psi}}_0(t'(\sigma))
	= \tilde{\psi}_0(t'(\sigma))$.
	Therefore, $\tilde{\tilde{\psi}}_0=\tilde{\psi}_0$ and $\tilde{\tilde{\psi}}=\tilde{\psi}$, proving the uniqueness of $\tilde{\psi}$.
\end{proof}

Here is another way to characterize \full quivers.

\begin{prop}
	Let $Q=(Q_0,Q_1,s,t)$ be a quiver. Define sets
	$$Q_s:=\{\omega_s\,|\,\omega\in Q_1\}, \quad
	Q_t:=\{\omega_t\,|\,\omega\in Q_1\}$$
	to be disjoint sets and in bijection with $Q_1$ by
	$$\left\{\begin{array}{ll}
		Q_1\to Q_s, &\omega \mapsto \omega_s, \\
		Q_1\to Q_t, &\omega \mapsto \omega_t, \end{array} \right. \quad \omega\in Q_1.$$
	Define a map
	$$ \pi: Q_s\sqcup Q_t\to Q_0, \quad
	\omega_s\mapsto s(\omega),\quad
	\omega_t\mapsto t(\omega), \quad \omega\in Q_1.$$
	Then the quiver $Q$ is  \full if and only if
	\begin{enumerate}
		\item $\pi$ is onto, and
		\item the relation $\equiv$ defined by the fibers of $\pi$:
		$$ u \equiv v \Leftrightarrow \pi(u)=\pi(v) \quad \tforall u, v\in Q_s\sqcup Q_t,$$
		is the equivalence relation spanned by the relation
		$$\sim:=\{(\alpha_t,\beta_s)\in Q_t\times Q_s\subseteq (Q_s\sqcup Q_t)\times (Q_s\sqcup Q_t)\,|\,t(\alpha)=s(\beta)\}.$$
	\end{enumerate}
	\mlabel{prop:fullq}
\end{prop}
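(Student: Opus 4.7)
The plan is to verify the two conditions of the proposition separately and lean on a structural observation about the relation $\sim$. The first condition is essentially tautological: $\pi$ is onto iff $Q_0 = s(Q_1)\cup t(Q_1) = \mim(s)\cup \mim(t)$, which is literally condition~(\mref{it:fq1}) of Definition~\mref{defn:fullq}. I would dispose of this in one line and concentrate on condition~(\mref{it:fq2}).

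The preliminary observation driving the rest is that the equivalence relation $\equiv'$ spanned by $\sim$ is \emph{always} contained in the fiber relation $\equiv$ induced by $\pi$: if $\alpha_t\sim\beta_s$ then by definition $t(\alpha)=s(\beta)$, so $\pi(\alpha_t)=t(\alpha)=s(\beta)=\pi(\beta_s)$, and $\equiv$ is already an equivalence relation. Hence the substantive issue is only when the reverse containment holds.

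For the forward direction, assuming $Q$ is \full, I would do a four-case analysis on $u,v\in Q_s\sqcup Q_t$ with $\pi(u)=\pi(v)=x$. The mixed cases $u=\alpha_t$, $v=\beta_s$ (or vice versa) give $u\sim v$ directly. In the pure cases $u=\alpha_s$, $v=\beta_s$ with $\alpha\neq\beta$ (and symmetrically in $Q_t$), we have $|s^{-1}(x)|\geq 2$, so $|s^{-1}(x)\cup t^{-1}(x)|\geq 2$, and fullness~(\mref{it:fq2}) yields $x\in\mim(s)\cap\mim(t)$. Pick any $\gamma$ with $t(\gamma)=x$; then $\gamma_t\sim\alpha_s$ and $\gamma_t\sim\beta_s$, giving $\alpha_s\equiv'\gamma_t\equiv'\beta_s$. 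The reflexive case $u=v$ is trivial.

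For the converse, assuming $\pi$ is onto and the fibers of $\pi$ coincide with $\equiv'$, I would argue condition~(\mref{it:fq2}) by contradiction. Suppose $x\in Q_0$ satisfies $|s^{-1}(x)\cup t^{-1}(x)|\geq 2$ but, say, $x\notin\mim(t)$. Then all arrows incident to $x$ are sourced at $x$, so there exist distinct $\alpha,\beta$ with $s(\alpha)=s(\beta)=x$. Then $\alpha_s$ and $\beta_s$ lie in the same fiber, so by hypothesis $\alpha_s\equiv'\beta_s$. Any alternating chain in $\sim$ joining two elements of $Q_s$ must traverse $Q_t$ at least once, so it must contain some step $\gamma_t\sim\alpha_s$, forcing $t(\gamma)=s(\alpha)=x$ and contradicting $x\notin\mim(t)$. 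The symmetric case $x\notin\mim(s)$ is identical. The only mild obstacle is organizing the case analysis so that the bridging argument in the forward direction and the chain-traversal argument in the converse mirror each other cleanly; once that structure is fixed everything reduces to bookkeeping.
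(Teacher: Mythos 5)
Your proposal is correct and takes essentially the same route as the paper's proof: both hinge on identifying surjectivity of $\pi$ with condition~(\mref{it:fq1}) of Definition~\mref{defn:fullq}, use the same bridging arrow $\gamma$ with $t(\gamma)=x$ in the forward direction, and extract $x\in\mim(s)\cap\mim(t)$ from an alternating $\sim$-chain in the converse. Your upfront remark that the span of $\sim$ is automatically contained in the fiber relation of $\pi$ (so only one containment needs \fuly) is a mild streamlining of the paper's case-by-case verification, not a different method.
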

\begin{proof}
	($\Longrightarrow$) Since $Q_0=\mim (s)\cup \mim(t)$, for every $x\in Q_0$, there exists $\alpha\in Q_1$ such that $s(\alpha)=x$ or $t(\alpha)=x$. Then $\pi(\alpha_s)=s(\alpha)=x$ or $\pi(\alpha_t)=t(\alpha)=x$, and so $\pi$ is onto. Let $\equiv'$ be the equivalence relation spanned by the relation $\sim$ defined as above. Let $u,v\in Q_s\sqcup Q_t$. Then there are four cases: (1) $u,v\in Q_s$; (2) $u\in Q_s, v\in Q_t$; (3) $u\in Q_t, v\in Q_s$; (4) $u,v\in Q_t$. We just verify $u\equiv v\Leftrightarrow u\equiv' v$ for the first case $u=\alpha_s,v=\beta_s\in Q_s$. The other cases  are similar. By $u\equiv v$, we obtain $\pi(u)=\pi(v)$, and then $s(\alpha)=s(\beta)=:x\in Q_0$. By Definition~\mref{defn:fullq}~(\mref{it:fq2}), there is $\gamma\in Q_1$ such that $t(\gamma)=x$, and so $s(\alpha)=t(\gamma)=s(\beta)$. This gives $\gamma_t \sim \alpha_s$ and $\gamma_t\sim \beta_s$, and hence $\alpha_s\equiv'\beta_s$, proving $u\equiv'v$. On the other hand, let $u\equiv' v$. Then we obtain $\alpha_s\equiv' {\gamma_1}_t\equiv'{\eta_1}_s\equiv'\cdots {\gamma_k}_t\equiv'\beta_s$ with $({\gamma_1}_t,\alpha_s),({\gamma_1}_t,{\eta_1}_s),\cdots,({\gamma_k}_t,\beta_s)\in \sim$. Thus
	$s(\alpha)=t(\gamma_1)=\cdots=t(\gamma_k)=s(\beta)$, which implies $\pi(\alpha_s)=\pi(\beta_s)$ and thus $u\equiv v.$
	\noindent
	
	($\Longleftarrow$) Let $x\in Q_0$. The condition that $\pi$ is onto gives $\pi(\omega_s)=x$ or $\pi(\omega_t)=x$ for some $\omega\in Q_1$. Then we obtain $s(\omega)=x$ or $t(\omega)=x$, and so $Q_0=\mim(s)\cup \mim(t)$. Let $|s^{-1}(x)\cup t^{-1}(x)|\geq 2$. Then there exists $\alpha,\beta\in Q_1$ such that $s(\alpha)=x=t(\beta)$, or $t(\alpha)=x=s(\beta)$, or $s(\alpha)=x=s(\beta)$, or $t(\alpha)=x=t(\beta)$. The first two cases mean $x\in\mim(s)\cap \mim(t)$.  By the third case $s(\alpha)=s(\beta)$, we have $\pi(\alpha_s)=\pi(\beta_s)$, and so $\alpha_s\equiv' \beta_s$. Then there exists $\gamma\in Q_1$ such that $s(\alpha)=t(\gamma)=s(\beta)=x$, giving $x\in \mim(s)\cap \mim(t)$. The fourth case is similarly checked. Hence $Q$ is \mreg.
\end{proof}

The close relationship between locality sets and quivers can be summarized in the following theorem. It resembles a Galois connection yet to be explored.

\begin{theorem}
	Let $\calq$ denote the set of (finite) quivers and let $\calq^{\ful}\subseteq \calq$ be the subset of \full (finite) quivers.  Let $\call\cals$ denote the set of (finite)  locality sets and let $\call\cals^{\reg}\subseteq \call\cals$ denote the subset of \mreg (finite) locality sets.
	Define the maps
	\begin{equation}
		\Psi:\calq\to \call\cals, \quad Q\mapsto X_Q
		\mlabel{eq:qtol}
	\end{equation}
	and
	\begin{equation}
		\Phi:\call\cals\to \calq, \quad X\mapsto Q_X.
		\mlabel{eq:ltoq}
	\end{equation}	
	\begin{enumerate}
		\item  The quiver $Q_{X}$ of a locality set $X$ is \full. In particular, the quiver $Q_{\mOmega_Q}$ is \full;
		\mlabel{it:qx}
		\item  The composition $\Phi\Psi$ is a projection onto $\calq^{\ful}$. More precisely, for every $Q\in \calq^{\ful}$, $Q_{X_Q}=Q$, and further $\Phi(\Psi(\calq))=\calq^{\ful}$.
		\mlabel{it:psf}
		\item The composition $\Psi\Phi$ is the projection onto $\call\cals^{\sreg}$. More precisely, for every $(X,\mtop_X) \in \call\cals^{\sreg}$,  $(X_{Q_X},\mtop_{X_{Q_X}})=(X,\mtop_X)$, and further
		$\Psi(\Phi(\call\cals))=\call\cals^{\sreg}$.
		\mlabel{it:xqx}
	\end{enumerate}
	\mlabel{thm:qxq}
\end{theorem}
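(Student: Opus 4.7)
The plan is to tackle the three parts of Theorem~\ref{thm:qxq} in order, exploiting the two ``if and only if'' characterizations already in hand: Proposition~\ref{prop:fullq} for smoothness of quivers and Lemma~\ref{lem:locequiv} for closedness of locality sets. These two results will do essentially all the real work, so that once part~(\ref{it:qx}) is established, parts~(\ref{it:psf}) and (\ref{it:xqx}) reduce to short verifications.

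For part~(\ref{it:qx}), I would unpack the construction: $Q_X=(Q_0,Q_1,\sq,\tq)$ has vertices the $\equiv$-classes in $X_s\sqcup X_t$, with $\sq(\alpha)=\overline{\alpha_s}$ and $\tq(\alpha)=\overline{\alpha_t}$. Condition~(\ref{it:fq1}) of Definition~\ref{defn:fullq} is immediate, since every class $\overline{u}$ contains a representative $\alpha_s$ or $\alpha_t$ and hence lies in $\mim(\sq)\cup \mim(\tq)$. For condition~(\ref{it:fq2}), using that $\sigma$ and $\tau$ are bijections into the disjoint sets $X_s$ and $X_t$, the hypothesis $|\sq^{-1}(\overline{u})\cup\tq^{-1}(\overline{u})|\geq 2$ forces the class $\overline{u}$ to contain at least two distinct elements of $X_s\sqcup X_t$. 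Since $\equiv$ is spanned by the bipartite relation $\sim$ (which only pairs $X_t$-elements with $X_s$-elements), any $\equiv$-class with more than one element must contain representatives from both $X_s$ and $X_t$, placing $\overline{u}$ in $\mim(\sq)\cap \mim(\tq)$. The ``in particular'' clause is the special case $X=X_Q$.

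For part~(\ref{it:psf}), the smooth-cover map $\phi_Q\colon Q_{X_Q}\to Q$ built just before the theorem is always a quiver homomorphism whose arrow-component is the identity on $Q_1$; what remains is to check that $\phi_0$ is a bijection when $Q$ is already smooth. But $\phi_0$ is precisely the map from $(Q_s\sqcup Q_t)/\!\!\equiv$ to $Q_0$ induced by $\pi$, and Proposition~\ref{prop:fullq} says exactly that, for smooth $Q$, the map $\pi$ is surjective and its fiber equivalence relation coincides with the one spanned by $\sim$. Hence $\phi_0$ is bijective and $Q_{X_Q}=Q$ for every smooth $Q$. Combined with part~(\ref{it:qx}) this yields both the fixed-point property and the identification $\Phi(\Psi(\calq))=\calq^{\ful}$.

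For part~(\ref{it:xqx}), the set-level identity $X_{Q_X}=X$ is forced by the construction: $Q_1$ of $Q_X$ was defined to equal $X$. The content is the equality of locality relations, which is precisely Lemma~\ref{lem:locequiv}: closedness of $(X,\top_X)$ gives $(\alpha,\beta)\in\top_X\Leftrightarrow \tq(\alpha)=\sq(\beta) \Leftrightarrow (\alpha,\beta)\in\top_{X_{Q_X}}$. The surjectivity $\Psi(\Phi(\call\cals))=\call\cals^{\sreg}$ then follows because Proposition~\ref{prop:quireg} guarantees that $\Psi$ always lands in $\call\cals^{\sreg}$, combined with the fixed-point property just shown. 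I expect the only real subtlety to lie in part~(\ref{it:qx}), specifically in handling the case when $|\sq^{-1}(\overline{u})\cup\tq^{-1}(\overline{u})|\geq 2$ comes from two arrows on the same side; the bipartite structure of $\sim$ is what makes that case go through, and once this is noted the remaining verifications are essentially immediate from the earlier technical results.
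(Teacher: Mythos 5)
Your proposal is correct and follows essentially the same route as the paper: part~(\mref{it:psf}) via Proposition~\mref{prop:fullq} showing the canonical vertex map between $Q_{X_Q}$ and $Q$ is a bijection, and part~(\mref{it:xqx}) via Lemma~\mref{lem:locequiv} combined with Proposition~\mref{prop:quireg}. The only minor difference is in part~(\mref{it:qx}), where you verify Definition~\mref{defn:fullq} directly using the bipartite nature of $\sim$ (a class with two or more elements must meet both $X_s$ and $X_t$), whereas the paper observes that $\pi$ is essentially the quotient map onto $Q_0=(X_s\sqcup X_t)/\!\!\equiv$ and invokes Proposition~\mref{prop:fullq}; both arguments are sound.
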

The theorem can also be recast in terms of category equivalence.

\begin{proof}
	(\mref{it:qx}) Let $X:=(X,\mtop_X)$ be in $\call\cals$. Let $\equiv$ denote the equivalent relation on $X_s\sqcup X_t$ spanned by the relation $\alpha_t\sim\beta_s \Leftrightarrow (\alpha,\beta)\in \mtop_X$, where $\alpha_t\in X_t$ and $\beta_s\in X_s$. By Definition~\mref{defn:quiloc}, we get
	$$Q_X=(Q_0,Q_1,s_Q,t_Q),$$
	where
	$$Q_1=X, \quad Q_0=(X_s\sqcup X_t)/\!\!\equiv,\quad  s_Q: Q_1\to Q_0, \alpha\mapsto \ol{\alpha_s}, \quad
	t_Q: Q_1\to Q_0, \alpha\mapsto \ol{\alpha_t}.$$
	We let $Q_s:=X_s$ and $Q_t:=X_t$. By the definition of $\pi$ in Proposition~\mref{prop:fullq}, we have
	$$\pi:Q_s\sqcup Q_t\to Q_0, \alpha_s\mapsto s_Q(\alpha), \alpha_t\mapsto t_Q(\alpha), \alpha_s\in Q_s,\,\alpha_t\in Q_t.$$
	Since $Q_0=(Q_s\sqcup Q_t)/\!\!\equiv, s_Q(\alpha)=\ol{\alpha_s}$ and $t_Q(\alpha)=\ol{\alpha_t}$, $\pi$ is onto.
	Let $\equiv'$ denote the equivalent relation on $Q_s\sqcup Q_t$ defined by $\alpha\equiv'\beta \Leftrightarrow \pi(\alpha)=\pi(\beta)$.
	We next verify $\equiv'=\equiv$, that is,
	\begin{equation}
		\alpha\equiv' \beta \Leftrightarrow\alpha\equiv\beta\quad \forall \alpha,\beta\in Q_s\sqcup Q_t.
		\mlabel{eq:qst}
	\end{equation}
	There are four cases to consider: (1) $\alpha,\beta\in Q_s$; (2) $\alpha\in Q_s,\beta\in Q_t$; (3) $\alpha\in Q_t, \beta\in Q_s$; (4) $ \alpha,\beta\in Q_t$. We just verify that Eq.~(\mref{eq:qst} holds for Case (2) since the other cases are similar.  So  let  $\alpha=\alpha_s\in Q_s$ and let $\beta=\beta_t\in Q_t$. Then
	\begin{eqnarray*}\alpha_s\equiv'\beta_t &\Leftrightarrow& \pi(\alpha_s)=\pi(\beta_t)\\
		&\Leftrightarrow& s_{Q}(\alpha)=t_{Q}(\beta)\\
		&\Leftrightarrow& \ol{\alpha_s}=\ol{\beta_t}\\
		&\Leftrightarrow& \alpha_s\equiv \beta_t,
	\end{eqnarray*}
	as needed.	
	\smallskip
	
	\noindent
	(\mref{it:psf}) Let $Q:=(Q_0,Q_1,s,t)\in \calq^{\sfull}$. Then there is a surjective map
	$$\phi:Q_s\sqcup Q_t\to Q_0, \omega_t\mapsto t(\omega), \omega_s\mapsto s(\omega),\quad \omega\in Q_1, $$
	and $\equiv$ defined by the fibers of $\phi$ equals the equivalent relation induced by $\alpha_t\sim \beta_s\Leftrightarrow t(\alpha)=s(\beta)$. Let $Q':=Q_{X_Q}$. Then by the definition of $Q_{X_Q}$, we have $Q'_1=X_Q= Q_1$, $Q'_0=({X_Q}_s\sqcup {X_Q}_t)/\!\!\equiv\,=(Q_s\sqcup Q_t)/\!\!\equiv$ by $Q_s={X_Q}_s$ and $Q_t={X_Q}_t$, and
	$$ s_{Q'}: Q'_1\to Q'_0, \alpha\mapsto \ol{\alpha_s}, \quad
	t_{Q'}: Q'_1\to \Q'_0, \alpha\mapsto \ol{\alpha_t}, \quad \alpha\in Q'_1.$$ Define a map
	$$f:Q_0\to Q'_0, \quad x\to \ol{\alpha},$$
	where $\phi(\alpha)=x$ with $\alpha\in Q_s\sqcup Q_t$. If $\phi(\omega)=x$ for any $\omega\in Q_s\sqcup Q_t$, then $\ol{\omega}=\ol{\alpha}$. Thus $f$ is well defined.
	
	We next verify that $f:Q_0\to Q'_0$  is a bijection and  $x\overset{\alpha}{\to} y \Leftrightarrow f(x)\overset{\alpha'}{\to} f(y)$ for all $x,y\in Q_0$, $\alpha'\in Q'_1, \alpha\in Q_1$. If $\ol{\alpha}=f(x)=f(y)=\ol{\beta}$, then $\alpha\equiv \beta$, and so  $x=\phi(\alpha)=\phi(\beta)=y$. Thus, $f$ is injective. Let $\ol{\alpha}\in Q'_0$ with $\alpha\in Q_s\sqcup Q_t$.  Then  $\phi(\alpha)=x$ for some $x\in Q_0$, Thus $f(x)=\ol{\alpha}$, and hence $f$ is surjective.
	
	Now let $x\overset{\alpha}{\to} y\in Q_1 $. Then $\alpha_s\in Q_s$ and $\alpha_t\in Q_t$. Since $\phi(\alpha_s)=s(\alpha)=x$ and $\phi(\alpha_t)=t(\alpha)=y$,  we have $f(x)=\ol{\alpha_s}$, and $f(y)=\ol{\alpha_t}$. For $\alpha\in Q'_1$, we have $s_{Q'}(\alpha)=\ol{\alpha_s}$ and $t_{Q'}(\alpha)=\ol{\alpha_t}$. Thus we get $f(x)\overset{\alpha}{\to} f(y)$.
	
	Let $f(x)\overset{\alpha}{\to} f(y)$. Then $s_{Q'}(\alpha)=\ol{\alpha_s}=f(x)$ and $t_{Q'}(\alpha)=\ol{\alpha_t}=f(y)$. This gives $\phi(\alpha_s)=x$ and $\phi(\alpha_t)=y$. Thus $x=\phi(\alpha_s)=s(\alpha)$  and $y=\phi(\alpha_t)=t(\alpha)$ and so $x\overset{\alpha}{\to} y$.
	This proves that $Q_{X_Q}=Q$, and then together with  (\mref{it:qx})  $\Phi(\Psi(\calq))=\calq^{\sfull}$ holds.
	\smallskip
	
	\noindent
	(\mref{it:xqx}) Let $X:=(X,\mtop_X)$ be a \mreg locality set. By Definition~\mref{defn:quiloc}, we get $Q_1=X$ and $Q_0=(X_s\sqcup X_t)/\!\!\equiv$. This gives $X_{Q_X}=X$, and
	\begin{equation}
		\mtop_{X_{Q_X}}=\{(\alpha,\beta)\in X\times X\,|\, t_Q(\alpha)=s_Q(\beta)\}.
		\mlabel{eq:defnxqx}
	\end{equation}
	Since $t_Q(\alpha)=\ol{\alpha_t}=\ol{\beta_s}=s_Q(\beta)$, we have $\alpha_t\equiv \beta_s$, and hence, $(\alpha,\beta)\in \mtop_X$ by Lemma~\mref{lem:locequiv}. Thus $\mtop_{X_{Q_X}}\subseteq \mtop_X$. On the other hand, if $(\alpha,\beta)\in\mtop_X$, then $\alpha_t\equiv \beta_s$. This leads to $t_Q(\alpha)=s_Q(\beta)$, and so $(\alpha,\beta)\in \mtop_{X_{Q_X}}$ by Eq.~(\mref{eq:defnxqx}). Then $\mtop_X\subseteq \mtop_{X_{Q_X}}$, and we get $\mtop_X= \mtop_{X_{Q_X}}$. So $(X_{Q_X},\mtop_{X_{Q_X}})=(X,\mtop_X)$.
	
By Proposition~\mref{prop:quireg}, for any quiver $Q$, the locality set $X_Q$ is \mreg. In particular, $X_{Q_X}$ is \mreg, proving
$\Psi(\Phi(\call\cals))=\call\cals^{\sreg}$.
\end{proof}

\section{Quiver semigroups as free \fine locality semigroups}
\mlabel{subsec:fineLSG}
We finally show that the path locality semigroup from a quiver is the free object in a suitable category.
For this purpose, we introduce a class of locality semigroups conditioned by path locality semigroups.

\begin{defn}
A locality semigroup $(S,\top,\mu_S)$ is called a {\bf \fine} if for any $a, b, c\in \top$,
	\begin{enumerate}
		\item when $(a,b)$ is in $\top$, then $(ab,c)\in T$ if and only if $(b,c)\in T$;
		\item when $(b,c)$ is in $\top$, then $(a,bc)\in T$ if and only if $(a,b)\in T$.
	\end{enumerate}
\end{defn}

Then from the definition of a path locality semigroup, we obtain

\begin{prop}
Let $(\calp,\mtop_\calp,\mu_\calp)$ be the path locality semigroup as defined in Proposition~\mref{prop:pathsg}. Then
	$(\calp,\mtop_\calp,\mu_\calp)$ is a \fine locality semigroup.
\mlabel{prop:pathresg}
\end{prop}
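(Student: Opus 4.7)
The plan is to verify the two defining conditions of a \fine locality semigroup directly from the construction of the path locality semigroup. Everything reduces to one elementary fact about path concatenation: if $p, q \in \calp$ satisfy $t(p) = s(q)$, then the concatenated path $pq$ has source $s(p)$ and target $t(q)$. This follows immediately from the definition of a nonempty path as a sequence of composable arrows $\alpha_1\alpha_2\cdots\alpha_k$ with $t(\alpha_i) = s(\alpha_{i+1})$, since concatenation inherits the source arrow of $p$ and the target arrow of $q$.

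With this observation in hand, I would unwind the first condition as follows. Suppose $p, q, r \in \calp$ with $(p, q) \in \mtop_\calp$, so that $t(p) = s(q)$ and $pq$ is defined with $t(pq) = t(q)$. Then $(pq, r) \in \mtop_\calp$ holds iff $t(pq) = s(r)$ iff $t(q) = s(r)$ iff $(q, r) \in \mtop_\calp$, which is exactly the required biconditional. The second condition is verified symmetrically: given $(q, r) \in \mtop_\calp$, the concatenation $qr$ has $s(qr) = s(q)$, and so $(p, qr) \in \mtop_\calp$ iff $t(p) = s(qr) = s(q)$ iff $(p, q) \in \mtop_\calp$.

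There is no real obstacle here; the proof is a straightforward unfolding of the definitions of $\mtop_\calp$ and of path concatenation. If anything, this verification confirms that the path locality semigroup is structurally simpler than a general locality semigroup, in precisely the sense that membership in $\mtop_\calp$ is determined solely by the endpoint data $(s(p), t(p))$ of each path, so the rigidity conditions are automatically forced by the combinatorics of how paths compose.
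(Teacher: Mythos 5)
Your verification is correct and is exactly the routine unfolding of definitions that the paper leaves implicit (it states the proposition follows "from the definition of a path locality semigroup" without writing out a proof): since $s(pq)=s(p)$ and $t(pq)=t(q)$ for composable paths, both rigidity biconditionals reduce to equalities of endpoints, as you show. Nothing is missing; your only inessential slip is the closing remark that membership in $\mtop_\calp$ depends on the pair $(s(p),t(p))$, when in fact only $t$ of the first path and $s$ of the second path matter, which does not affect the argument.
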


The collection of \fine locality semigroups forms a full subcategory of the category of locality semigroups and we can talk about its free objects.

\begin{defn}
Let $(X,\mtop_X)$ be a locality set. A {\bf free \fine locality semigroup} on $(X,\mtop_X)$ is a \fine locality semigroup $(F_L(X),\mtop_F)$ together with a locality  map $j_X:
(X,\mtop_X)\to (\FL(X),\mtop_F)$ such that, for any \fine locality semigroup $(S,\mtop_S)$ and
any locality map $f:(X,\mtop_X)\to (S, \mtop_S)$, there exists a unique
locality semigroup homomorphism $\free{f}: (\FL(X),\mtop_F)\to (S,\mtop_S)$
such that $f=\free{f}\circ j_X$, that is, the following diagram
$$\xymatrix{ (X,\mtop_X)  \ar[rr]^{j_X}\ar[drr]_{f} && (\FL(X),\mtop_F) \ar[d]^{\free{f}} \\
&& (S,\mtop_S)}
$$
commutes.
\mlabel{defn:freelocsg}
\end{defn}

Let $(\mOmega,\mtop_X)$ be  a locality set and let $\calp:=\calp_\mOmega:=\calp_{Q_\mOmega}$ be the path locality semigroup of the quiver $Q_\mOmega$ of $\mOmega$. Then $\mtop_\calp=\{(\alpha,\beta)\in \calp\times\calp\,|\,t_Q(\alpha)=s_Q(\beta)\}$. If $(\alpha,\beta)\in \mtop_X$, then $\overline{\alpha_t}=\overline{\beta_s}$ in
$Q_0:=( \mOmega_s\sqcup  \mOmega_t)/\!\!\equiv$. By the definitions of $t_Q$ and $s_Q$, we have $t_Q(\alpha)=\overline{\alpha_t}=\overline{\beta_s}=s_Q(\beta) $, and so $\mtop_X\subseteq \mtop_\calp$.
Define a set map
\begin{equation}
j_\mOmega: \mOmega\to \calp,\quad\omega\mapsto \omega,\quad \omega\in \mOmega.
\end{equation}
Then $(j_\mOmega\times j_\mOmega)(\alpha,\beta)=(\alpha,\beta)\in \mtop_\calp$ for all $(\alpha,\beta)\in \mtop_\mOmega$, and so $j_\mOmega$ is a locality map. We now give the universal property of path locality semigroups.

\begin{theorem}
\begin{enumerate}
\item 	
Let $X=(X,\mtop_X)$ be a \mreg locality set and let $Q_X$ be the quiver of $X$. The path locality semigroup $\calp_{Q_X}$ together with the locality map $j_X$ is the free \fine locality semigroup on  $X$.
\mlabel{it:freeloc1}
\item For every quiver $Q$,  the path locality semigroup $\calp_Q$ is the free \fine locality semigroup on the locality set $(X_Q, \mtop_{X_Q})$.
\mlabel{it:freeloc2}
\end{enumerate}
\mlabel{thm:freeloc1}
\end{theorem}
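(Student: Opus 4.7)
The plan is to prove (\mref{it:freeloc1}) directly by a concatenation-based construction of the universal map and to derive (\mref{it:freeloc2}) as an immediate consequence of Proposition~\mref{pp:pathsame}. Fix a \mreg locality set $X$, let $Q_X$ be its quiver, and let $\calp := \calp_{Q_X}$ be the path locality semigroup of $Q_X$. Given a \fine locality semigroup $(S,\mtop_S,\cdot_S)$ and a locality map $f:X\to S$, I would define $\bar{f}:\calp\to S$ on a path $p=\alpha_1\alpha_2\cdots\alpha_k$ by
\begin{equation*}
\bar{f}(p):=f(\alpha_1)\cdot_S f(\alpha_2)\cdot_S\cdots\cdot_S f(\alpha_k),
\end{equation*}
bracketed, say, from the left.

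The first step is to check that this expression is meaningful in $S$. By Lemma~\mref{lem:locequiv}, consecutivity in $p$ means $(\alpha_i,\alpha_{i+1})\in\mtop_X$ for $1\le i\le k-1$, hence $(f(\alpha_i),f(\alpha_{i+1}))\in\mtop_S$ since $f$ is a locality map. A straightforward induction on $i$ using the first defining clause of the \fine axioms then yields $(f(\alpha_1)\cdots f(\alpha_i),f(\alpha_{i+1}))\in\mtop_S$, so every successive left-multiplication is legal. Locality associativity in $S$ then guarantees that $\bar{f}(p)$ is independent of the bracketing.

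The second step is to verify that $\bar{f}$ is a locality semigroup homomorphism and that $\bar{f}\circ j_X=f$ holds by construction. If $(p,q)\in\mtop_\calp$ with $p=\alpha_1\cdots\alpha_k$ and $q=\beta_1\cdots\beta_m$, then $(\alpha_k,\beta_1)\in\mtop_X$ by Lemma~\mref{lem:locequiv}, so the concatenation $pq$ is a path of $Q_X$, and the same inductive use of the \fine axioms yields $(\bar{f}(p),\bar{f}(q))\in\mtop_S$ and $\bar{f}(pq)=\bar{f}(p)\cdot_S\bar{f}(q)$. Uniqueness of $\bar{f}$ is immediate: any locality semigroup homomorphism extending $f$ must send $\alpha_1\cdots\alpha_k$ to $f(\alpha_1)\cdots f(\alpha_k)$, since every path in $\calp$ is generated from $X$ under the partial multiplication. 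For (\mref{it:freeloc2}), apply (\mref{it:freeloc1}) to the \mreg locality set $(X_Q,\mtop_{X_Q})$ (Proposition~\mref{prop:quireg}) and observe that $\calp_Q=\calp_{Q_{X_Q}}$ by Proposition~\mref{pp:pathsame}, so the universal property of $\calp_{Q_{X_Q}}$ on $X_Q$ transfers directly to $\calp_Q$ on $(X_Q,\mtop_{X_Q})$.

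The main obstacle is the well-definedness of $\bar{f}$ at the first step: the pairwise $\mtop_S$-compatibility of the images $f(\alpha_i)$ does not, in a general locality semigroup, imply compatibility of a prefix product $f(\alpha_1)\cdots f(\alpha_i)$ with the next factor $f(\alpha_{i+1})$. The first \fine axiom is precisely the bridge that lets pairwise $\mtop_S$-compatibility along a path propagate to the accumulating product, and its symmetric companion plays the analogous role on the right when checking the homomorphism property for $(p,q)$. Thus the \fine hypothesis on $S$ is used in an essential way at exactly the recursive step extending $\bar{f}$ beyond products of length two.
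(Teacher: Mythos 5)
Your proposal is correct and follows essentially the same route as the paper: the same concatenation formula for $\free{f}$, with the \mreg hypothesis entering through Lemma~\mref{lem:locequiv} and the \fine axioms propagating locality to prefix and suffix products (your inline inductions are exactly the paper's Lemmas~\mref{lem:fs} and~\mref{lem:fmul}), the same uniqueness argument on generators, and the same reduction of part~(\mref{it:freeloc2}) to part~(\mref{it:freeloc1}) via Propositions~\mref{prop:quireg} and~\mref{pp:pathsame}. No substantive differences to report.
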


Before proving the theorem,  we first give two preliminary results.
\begin{lem}	\mlabel{lem:fs}
	Let $(S,\mtop_S)$ be a  \fine locality semigroup and let $n\geq 2$.  Let $(X,\mtop_X)$ be a locality set and let $(x_i,x_{i+1})\in \mtop_X$ for $i=1,\cdots,n-1$. If $f:(X,\mtop_X)\to (S,\mtop_S)$ is a locality map, then
	\begin{equation}
		(f(x_1)\cdots f(x_{n-1}),f(x_{n}))\in \mtop_S
		\mlabel{eq:fs1}
	\end{equation}
	and
	\begin{equation}
		(f(x_1),f(x_2)\cdots f(x_{n}))\in \mtop_S.
		\mlabel{eq:fs2}
	\end{equation}
\end{lem}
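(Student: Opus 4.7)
The plan is to establish both equations~(\mref{eq:fs1}) and~(\mref{eq:fs2}) simultaneously by induction on $n \geq 2$. The base case $n = 2$ reduces each equation to $(f(x_1), f(x_2)) \in \mtop_S$, which is immediate from $f$ being a locality map together with $(x_1, x_2) \in \mtop_X$.

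For the inductive step, suppose both statements hold up through some $n \geq 2$, and suppose $(x_i, x_{i+1}) \in \mtop_X$ for $i = 1, \ldots, n$. To derive~(\mref{eq:fs1}) for $n+1$, I would set $a = f(x_1) \cdots f(x_{n-1})$, $b = f(x_n)$, $c = f(x_{n+1})$. The inductive hypothesis for~(\mref{eq:fs1}) at $n$ gives $(a, b) \in \mtop_S$, so in particular $ab = f(x_1) \cdots f(x_n)$ is well defined. The first rigidity axiom then asserts that, under $(a,b) \in \mtop_S$, we have $(ab, c) \in \mtop_S$ if and only if $(b, c) \in \mtop_S$; and $(b, c) = (f(x_n), f(x_{n+1})) \in \mtop_S$ because $f$ is a locality map and $(x_n, x_{n+1}) \in \mtop_X$. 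This gives~(\mref{eq:fs1}) for $n+1$. A symmetric argument using the second rigidity axiom, applied to $a = f(x_1)$, $b = f(x_2)$, and $c = f(x_3) \cdots f(x_{n+1})$ (where $(b, c) \in \mtop_S$ by the inductive hypothesis for~(\mref{eq:fs2}) applied to the shifted sequence $x_2, \ldots, x_{n+1}$), yields~(\mref{eq:fs2}) for $n+1$.

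The one bookkeeping point that deserves care is that the iterated products appearing in the proof must be well defined as elements of $S$; but this is exactly what the inductive hypothesis guarantees, since a pair belonging to $\mtop_S$ is precisely the condition under which its product exists, and locality associativity from Definition~\mref{defn:locsg} ensures the products are unambiguous. I do not anticipate any genuine obstacle: the two rigidity axioms are tailor-made to propagate a chain of consecutive localities $(x_i, x_{i+1})$ into localities between iterated products, and the induction is a straightforward chain-peeling argument.
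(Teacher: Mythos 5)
Your induction for~\meqref{eq:fs1} is exactly the paper's argument: peel off the last factor, use the inductive hypothesis to get $(f(x_1)\cdots f(x_{n-1}),f(x_n))\in\mtop_S$, and apply the first \fine axiom with $(f(x_n),f(x_{n+1}))\in\mtop_S$. The problem is in your step for~\meqref{eq:fs2}. You split the product as $a=f(x_1)$, $b=f(x_2)$, $c=f(x_3)\cdots f(x_{n+1})$, so the second \fine axiom hands you the pair $\bigl(f(x_1),\,f(x_2)\cdot(f(x_3)\cdots f(x_{n+1}))\bigr)\in\mtop_S$, whose second entry is a \emph{re-bracketed} product. Your justification that this is harmless --- ``locality associativity from Definition~\mref{defn:locsg} ensures the products are unambiguous'' --- does not hold under the stated axioms: the locality associative law only applies to triples with \emph{all three} pairs $(a,b),(a,c),(b,c)$ in $\mtop$, and here the needed extra localities (of the type $(f(x_2),f(x_{n+1}))\in\mtop_S$) are simply not available; the \fine condition constrains only the relation $\mtop_S$, not the values of products. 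Indeed one can write down a \fine locality semigroup with elements $a,b,c$ satisfying $(a,b),(b,c)\in\mtop$, $(a,c)\notin\mtop$, in which $(ab)c\neq a(bc)$ (take $\mtop=\{(a,b),(b,c),(ab,c),(a,bc)\}$ with all products distinct; both axioms and the conditional associativity hold vacuously). So in a general \fine locality semigroup the iterated product of a chain is a priori bracketing-dependent, and your argument proves locality of $f(x_1)$ with a possibly different element than the one in~\meqref{eq:fs2}.

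The paper's proof avoids this entirely by never re-associating: all products $f(x_1)\cdots f(x_k)$ are built by appending one factor on the right. For~\meqref{eq:fs2} at $n+1$ it takes $a=f(x_1)$, $b=f(x_2)\cdots f(x_n)$ and $c=f(x_{n+1})$, where $(a,b)\in\mtop_S$ is the inductive hypothesis for~\meqref{eq:fs2} and $(b,c)\in\mtop_S$ is the inductive hypothesis for~\meqref{eq:fs1} applied to the shifted sequence $x_2,\dots,x_{n+1}$ (so the two statements are proved by a genuinely simultaneous induction); then the second \fine axiom gives $(a,bc)\in\mtop_S$ with $bc$ the same left-nested product, no associativity needed. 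Your argument is repaired by exactly this one-line change --- invoke~\meqref{eq:fs1}, not~\meqref{eq:fs2}, on the shifted sequence --- but as written the re-bracketing step is a gap.
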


\begin{proof}
	We prove Eqs.~(\mref{eq:fs1}) and (\mref{eq:fs2}) by induction on $n\geq 2$. For $n=2$, we have $(f(x_1),f(x_2))\in \mtop_S$ by $f$ being a locality map.
	Assume that Eqs.~(\mref{eq:fs1}) and (\mref{eq:fs2}) have been proved for $n\leq k$. Consider $n=k+1$.  Since $f$ is a locality map and $(x_i,x_{i+1})\in \mtop_X$ for $i=1,\cdots,k$, $(f(x_{i}),f(x_{i+1}))\in \mtop_S$ for all $i$, and especially $(f(x_k),f(x_{k+1}))\in\mtop_S$. By the induction hypothesis, we get
	$$(f(x_1)\cdots f(x_{k-1}), f(x_{k}))\in \mtop_S,\quad
	(f(x_1),f(x_2)\cdots f(x_{k}))\in \mtop_S$$
	and $ (f(x_2)\cdots f(x_{k}),f(x_{k+1}))\in \mtop_S$.
	Since $(S,\mtop_S)$ is a \fine locality semigroup,  we have
	$$(f(x_1)\cdots f(x_{k-1})f(x_k),f(x_{k+1}))\in \mtop_S$$
	and
	$$(f(x_1),f(x_2)\cdots f(x_{k})f(x_{k+1}))\in \mtop_S.$$
	This completes the induction.
\end{proof}

\begin{lem}\mlabel{lem:fmul}
	Let $(S,\mtop_S)$ be a \fine locality semigroup and let $m,n\geq 1$.  Let $(X,\mtop_X)$ be a locality set. Let $(x_i,x_{i+1})$, $(y_j,y_{j+1})\in \mtop_X$ for $i=1,\cdots,m-1$ and $j=1,\cdots,n-1$.  If $f:(X,\mtop_X)\to (S,\mtop_S)$  is a locality map and $(x_m,y_1)\in \mtop_X$, then
	\begin{equation}\mlabel{eq:fmul}
		(f(x_1)\cdots f(x_m),f(y_1)\cdots f(y_n))\in \mtop_S.
	\end{equation}
\end{lem}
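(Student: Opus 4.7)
The plan is to prove the lemma by induction on $n \geq 1$, treating $m$ as fixed, and leveraging Lemma~\mref{lem:fs} together with the second clause of the \fine condition.

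For the base case $n=1$, the hypothesis gives a chain $(x_1,x_2),(x_2,x_3),\dots,(x_{m-1},x_m),(x_m,y_1) \in \mtop_X$, that is, an alternating chain of length $m+1$ in $X$. Applying Eq.~\meqref{eq:fs1} of Lemma~\mref{lem:fs} to this chain immediately gives
\[
(f(x_1)\cdots f(x_m),\, f(y_1)) \in \mtop_S,
\]
which is exactly Eq.~\meqref{eq:fmul} for $n=1$.

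For the inductive step, assume that the claim holds for $n=k-1$. To show it for $n=k$, first apply the induction hypothesis to the chain $(x_1,\dots,x_m)$ and the truncated chain $(y_1,\dots,y_{k-1})$ (still connected by $(x_m,y_1)\in \mtop_X$) to obtain
\[
\bigl(f(x_1)\cdots f(x_m),\; f(y_1)\cdots f(y_{k-1})\bigr) \in \mtop_S.
\]
Separately, Eq.~\meqref{eq:fs1} applied to the chain $(y_1,\dots,y_k)$ gives
\[
\bigl(f(y_1)\cdots f(y_{k-1}),\; f(y_k)\bigr) \in \mtop_S.
\]
Now set $a = f(x_1)\cdots f(x_m)$, $b = f(y_1)\cdots f(y_{k-1})$, and $c = f(y_k)$. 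We have $(a,b), (b,c) \in \mtop_S$, so the second clause of the \fine condition applies: since $(b,c)\in \mtop_S$, the equivalence $(a,bc)\in \mtop_S \Leftrightarrow (a,b)\in \mtop_S$ yields $(a,bc)\in \mtop_S$. Unwinding, $bc = f(y_1)\cdots f(y_k)$, which gives Eq.~\meqref{eq:fmul} and completes the induction.

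No real obstacle is anticipated; the only subtlety is to make sure the previous lemma really does apply to the combined chain in the base case (it does, because the joining pair $(x_m,y_1)$ is in $\mtop_X$ by hypothesis), and to invoke the correct half of the \fine condition in the inductive step. One could symmetrically induct on $m$ using Eq.~\meqref{eq:fs2} and the first clause of the \fine condition; the argument is the same up to left-right duality.
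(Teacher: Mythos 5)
Your proof is correct. It uses the same two ingredients as the paper---Lemma~\ref{lem:fs} and the \fine condition---but organizes them differently: you induct on $n$, peeling off the last factor $f(y_k)$, using Eq.~\eqref{eq:fs1} (once on the concatenated chain $x_1,\dots,x_m,y_1$ for the base case, once on $y_1,\dots,y_k$ in the step) and only the second clause of the \fine condition. The paper instead argues directly for $m,n\geq 2$ by splitting at the junction: it gets $(f(x_1)\cdots f(x_{m-1}),f(x_m))$ from Eq.~\eqref{eq:fs1}, $(f(y_1),f(y_2)\cdots f(y_n))$ from Eq.~\eqref{eq:fs2}, uses locality of $f$ for $(x_m,y_1)$, and then applies the first clause to absorb $f(x_m)$ and the second to absorb $f(y_2)\cdots f(y_n)$; its treatment of the special cases $m=1$ or $n=1$ coincides with your base case. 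Both routes are sound and of comparable length; yours trades the paper's one-shot use of both clauses for an induction that needs only one clause, while the paper's avoids a second induction on top of the one already done in Lemma~\ref{lem:fs}. Your identification of $bc$ with $f(y_1)\cdots f(y_k)$ rests on the same implicit locality-associativity convention for multi-factor products that the paper itself uses, so it is not a gap.
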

\begin{proof} We first verify that Eq.~(\mref{eq:fmul}) holds for the special cases when $m=1$ or $n=1$.  If $m=1$, then by the conditions that
	$(x_1,y_1), (y_1,y_2),\cdots,(y_{n-1},y_n)\in \mtop_X$
	and  Eq.~(\mref{eq:fs2}), we obtain $(f(x_1),f(y_1)\cdots f(y_n))\in \mtop_S$. Similarly, if $n=1$, then by
	$(x_1,x_2), \cdots,(x_{m-1},x_m),(x_m,y_1)\in \mtop_X$ and Eq.~(\mref{eq:fs1}), we get $(f(x_1)\cdots f(x_m),f(y_1))\in\mtop_S$.
	
	Next we consider $m,n\geq 2$.  By Eqs.~(\mref{eq:fs1}) and ~(\mref{eq:fs2}) again, we obtain
	\begin{equation}
		(f(x_1)\cdots f(x_{m-1}), f(x_m))\in \mtop_S
		\mlabel{eq:fxm}
	\end{equation}
	and
	\begin{equation}
		(f(y_1),f(y_2)\cdots f(y_n))\in \mtop_S.
		\mlabel{eq:fbeta}
	\end{equation}
	Since $f$ is a locality map, we have $(f(x_m),f(y_1))\in \mtop_S$. Then by $(S,\mtop_S)$ being a \fine locality semigroup and Eq.~(\mref{eq:fxm}), we obtain $(f(x_1)\cdots f(x_{m-1})f(x_m),f(y_1))\in \mtop_S,$
and so $(f(x_1)\cdots f(x_{m-1})f(x_m),f(y_1)f(y_2)\cdots f(y_n))\in \mtop_S$ by Eq.~(\mref{eq:fbeta}).
\end{proof}

\begin{proof} (of Theorem~\mref{thm:freeloc1})\\
\meqref{it:freeloc1} By Proposition~\mref{prop:pathresg}, the path locality semigroup $\calp:=\calp_{Q_X}$ is a \fine locality semigroup.
We next show that $(\calp,\mtop_\calp, \mu_\calp)$ satisfies the required universal property.
Let $(S,\mtop_S)$ be a \fine locality semigroup and let $f: (\mOmega,\mtop_\mOmega) \to (S,\mtop_S)$ be a locality map. Define a set map
$$
\bar{f}: \calp \to S,\quad
p\mapsto  \bar{f}(p):=f(\alpha_1)f(\alpha_2)\cdots f(\alpha_k),
$$
where $ p=\alpha_1\cdots\alpha_k\in \calp$,  and $\alpha_i\in \mOmega$ for $1\leq i\leq k.$  Since $(\alpha_i,\alpha_{i+1})\in\mtop_\mOmega$ for $i=1,\cdots,k-1$, we have $(f(\alpha_i),f(\alpha_{i+1}))\in\mtop_S$ and thus $\bar{f}(p)$ is well-defined by Lemma~\mref{lem:fs}.
By the definition of $\bar{f}$, $f=\bar{f}\circ j_\mOmega$.
We next prove that $\bar{f}$ is  a  locality semigroup homomorphism, that is, $(\bar{f}\times\bar{f})(p_1,p_2)=(\bar{f}(p_1),\bar{f}(p_2))\in \mtop_S$  and $\bar{f}(p_1p_2)=\bar{f}(p_1)\bar{f}(p_2)$ for all $(p_1,p_2)\in \mtop_\calp$. Let $(p_1,p_2)\in \mtop_\calp$.
Suppose that $p_1=\alpha_1\cdots\alpha_m$ and $p_2=\beta_1\cdots \beta_n$ with $m,n\geq 1$, where $\alpha_i, \beta_j\in \mOmega$ for $1\leq i\leq m $ and $1\leq j\leq n$. Then we obtain $(\alpha_i,\alpha_{i+1})\in \mtop_\mOmega$ and  $(\beta_j,\beta_{j+1})\in \mtop_\mOmega$ for $i=1,\cdots, m-1$ and $j=1,\cdots,n-1$. By $(p_1,p_2)\in \mtop_\calp$,
$$t_Q(\alpha_m)=t_Q(p_1)=s_Q(p_2)=s_Q(\beta_1),$$
and so $(\alpha_m,\beta_1)\in \mtop_X$ by Lemma~\mref{lem:locequiv}.
Thus, by Lemma~\mref{lem:fmul}, we get
$$(f(\alpha_1)\cdots f(\alpha_{m-1})f(\alpha_m),f(\beta_1)f(\beta_2)\cdots f(\beta_n))\in \mtop_S.$$
This means that $(\bar{f}(p_1),\bar{f}(p_2))$ is in $\mtop_S$. Now we prove $\bar{f}(p_1p_2)=\bar{f}(p_1)\bar{f}(p_2)$.
By the definition of $\bar{f}$, we have
\begin{eqnarray*}
\bar{f}(p_1p_2)&=&\bar{f}(\alpha_1\cdots\alpha_m\beta_1\cdots\beta_n)\\
&=&f(\alpha_1)\cdots f(\alpha_m)f(\beta_1)\cdots f(\beta_n)\\
&=&\bar{f}(p_1)\bar{f}(p_2).
\end{eqnarray*}
To complete the proof, we finally verify the uniqueness of $\bar{f}$. Assume that there is another  locality semigroup homomorphism $\tilde{f}:(\calp,\mtop_\calp)\to (S,\mtop_S)$ such that $f=\tilde{f}\circ j_\mOmega$. For every $p=\alpha_1\cdots\alpha_k\in \calp$, we have
\begin{eqnarray*}
\tilde{f}(p)&=&\tilde{f}(\alpha_1\cdots \alpha_k)\\
&=&\tilde{f}(\alpha_1)\cdots\tilde{f}(\alpha_k)\\
&=&\tilde{f}(j_\mOmega(\alpha_1))\cdots\tilde{f}(j_\mOmega(\alpha_k))\\
&=&f(\alpha_1)\cdots f(\alpha_k)\\
&=&\bar{f}(p).
\end{eqnarray*}
Thus $\tilde{f}=\bar{f}$, as desired.

\smallskip

\noindent
\meqref{it:freeloc2}
By Proposition~\mref{pp:pathsame}, the path locality semigroup $\calp_Q$ is identified with the path locality semigroup $\calp_{Q_{X_Q}}$. Thus by Item~\meqref{it:freeloc1}, $\calp_Q$ is the free \fine locality semigroup on the locality set $(X_Q,\top_{X_Q})$.
\end{proof}

\noindent
{\bf Acknowledgements.} This work was supported by the National Natural Science Foundation of
China (Grant No.~11601199 and 11961031). Shanghua Zheng thanks Rutgers University-Newark for hospitality.

\end{document}